\newtheorem{theorem}{Theorem}[section]
\newtheorem{lemma}[theorem]{Lemma}
\newtheorem{proposition}[theorem]{Proposition}
\newtheorem{corollary}[theorem]{Corollary}
\newenvironment{customthm}[1]
  {\innercustomthm}
  {\endinnercustomthm}
\theoremstyle{definition}
\newtheorem{definition}[theorem]{Definition}
\newtheorem{example}[theorem]{Example}
\newtheorem*{question*}{Question}
\theoremstyle{remark}
\newtheorem{remark}[theorem]{Remark}
\newtheoremstyle{cited}{.5\baselineskip\@plus.2\baselineskip\@minus.2\baselineskip}{.5\baselineskip\@plus.2\baselineskip\@minus.2\baselineskip}{\itshape}{}{\bfseries}{\bfseries .}{5pt plus 1pt minus 1pt}{\thmname{#1}\thmnumber{ #2}\thmnote{ #3}}
\theoremstyle{cited}
\newtheorem{citedthm}[theorem]{Theorem}
\newtheorem{citedconj}[theorem]{Conjecture}
\newtheorem*{customthm*}{Theorem}
\DeclareMathOperator{\mult}{mult}
\DeclareMathOperator{\Char}{char}
\DeclareMathOperator{\Sym}{Sym}
\newcommand{\PP}{\mathbf{P}}
\newcommand{\QQ}{\mathbf{Q}}
\newcommand{\RR}{\mathbf{R}}
\newcommand{\cO}{\mathcal{O}}
\newcommand{\sI}{\mathscr{I}}
\newcommand{\sP}{\mathscr{P}}
\newcommand{\fa}{\mathfrak{a}}
\newcommand{\fm}{\mathfrak{m}}
\newcommand{\longtwoheadrightarrow}{\mathrel{\text{\tikz \draw [-cm double to] (0,0) (0.05em,0.5ex) -- (1.525em,0.5ex);}\hspace{0.05em}}}
\begin{document}
\title[Frobenius--Seshadri constants and characterizations of projective
space]{Frobenius--Seshadri constants and\\characterizations of projective space}
\author{Takumi Murayama}
\thanks{This material is based upon work supported by the National Science
Foundation under Grant No.\ DMS-1265256}
\keywords{Seshadri constants, Frobenius, separation of jets, projective space}
\subjclass[2010]{Primary 14C20; Secondary 13A35, 14E25, 14M20}
\address{Department of Mathematics\\University of Michigan\\
Ann Arbor, MI 48109-1043, USA}
\email{\href{mailto:takumim@umich.edu}{takumim@umich.edu}}
\urladdr{\url{http://www-personal.umich.edu/~takumim/}}

\makeatletter
  \hypersetup{pdfsubject=\@subjclass,pdfkeywords=\@keywords}
\makeatother

\begin{abstract}
  We introduce higher-order variants of the Frobenius--Seshadri constant due to
  Musta\c{t}\u{a} and Schwede, which are defined for ample line bundles in
  positive characteristic. These constants are used to show that Demailly's
  criterion for separation of higher-order jets by adjoint bundles also holds
  in positive characteristic. As an application, we give a characterization of
  projective space using Seshadri constants in positive characteristic, which
  was proved in characteristic zero by Bauer and Szemberg. We also discuss
  connections with other characterizations of projective space.
\end{abstract}

\maketitle

\section{Introduction}\label{s:intro}
Let $L$ be an ample line bundle on a smooth projective variety $X$ defined over
an algebraically closed field $k$. Demailly in \cite[\S6]{Dem92}
introduced the \textsl{Seshadri constant} $\varepsilon(L;x)$, which measures the
local positivity of
$L$ at a closed point $x \in X$. If $\mu\colon X' \to X$ is the
blow-up of $X$ at $x$ with exceptional divisor $E$, then the Seshadri constant
is
\[
  \varepsilon(L;x) \coloneqq \sup\bigl\{t \in \RR_{\ge0} \bigm\vert \mu^*(L)
  (-tE)\ \text{is nef} \bigr\}.
\]
Seshadri constants have received much attention since their inception: see
\cite{BDHKKSS09} and \cite[Ch.\ 5]{Laz04}.
\par Part of this interest in Seshadri constants stems from the fact that they
give effective positivity statements for adjoint bundles. We will
be particularly interested in how Seshadri constants can determine
when adjoint bundles
separate higher-order jets. Recall that we say $L$ \textsl{separates $\ell$-jets}
if the restriction map
\begin{equation}\label{eq:defsepjets}
  H^0(X,L) \longrightarrow H^0(X,L \otimes \cO_X/\fm_x^{\ell+1})
\end{equation}
is surjective, where $\fm_x \subset \cO_X$ is the ideal defining $x$.
Algebraically, this says $\ell$th-order Taylor polynomials
at $x$ are restrictions of global sections, and geometrically, this
says $L$ separates $\ell$th-order tangent directions at $x$.
\par We can now state our first main result. Let $\omega_X$ denote the
canonical bundle on $X$.
\begin{customthm*}[\ref{thm:sepljetsdem}]
  Let $L$ be an ample line bundle on a smooth projective variety $X$ of
  dimension $n$ defined
  over an algebraically closed field of positive characteristic. If
  $\varepsilon(L;x) > n + \ell$ at a closed point $x \in X$, then $\omega_X
  \otimes L$ separates $\ell$-jets at $x$.
\end{customthm*}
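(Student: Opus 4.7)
The approach is to mimic Demailly's characteristic-zero proof of this separation-of-jets criterion, replacing Kawamata--Viehweg vanishing with a Frobenius trace argument, in the spirit of Musta\c{t}\u{a}--Schwede's treatment of the $\ell = 0$ global-generation case. Concretely, the plan is to factor the theorem through the higher-order Frobenius--Seshadri constant $\varepsilon_F^\ell(L;x)$ developed earlier in the paper by proving the two implications
\[
  \varepsilon(L;x) > n + \ell \;\Longrightarrow\; \varepsilon_F^\ell(L;x) > n \;\Longrightarrow\; \omega_X \otimes L \text{ separates } \ell\text{-jets at } x.
\]

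For the first implication, the hypothesis says that on the blow-up $\mu\colon X' \to X$ at $x$, the divisor $\mu^*L - (n+\ell)E$ is nef. Choosing $q = p^e$ sufficiently large and applying an asymptotic Riemann--Roch/Fujita-type computation on $X'$ to $\mu^*L^m - (n+\ell)qE$, one produces sections of $L^m$ realizing arbitrary Taylor data modulo the Frobenius-twisted jet ideal $(\fm_x^{\ell+1})^{[q]}$; this is the defining surjection
\[
  H^0(X, L^m) \longtwoheadrightarrow H^0\!\bigl(X, L^m \otimes \cO_X/(\fm_x^{\ell+1})^{[q]}\bigr)
\]
underlying $\varepsilon_F^\ell(L;x) > n$. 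For the second implication, smoothness of $X$ ensures that the $e$-th iterated absolute Frobenius $F^e\colon X \to X$ is finite and flat, and Grothendieck duality yields a surjective trace $F^e_*\omega_X \longtwoheadrightarrow \omega_X$. Twisting the above surjection by $\omega_X$, applying $F^e_*$, using the projection formula $F^e_*(F^{e*}L \otimes \omega_X) \cong L \otimes F^e_*\omega_X$, and composing with the trace, one converts a Frobenius-separation statement for $L^m$ into the ordinary $\ell$-jets separation for $\omega_X \otimes L$.

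The main obstacle is the first implication: one must set up $\varepsilon_F^\ell(L;x)$ and its defining Frobenius-twisted jet ideal precisely so that the bound $> n$ is the trace-dual of ordinary $\ell$-jet separation. Achieving the sharp factor $n+\ell$---rather than a weaker bound such as $n(\ell+1)$---requires exploiting the identity $(\fm_x^{\ell+1})^{[q]} = (\fm_x^{[q]})^{\ell+1}$ together with the Brian\c{c}on--Skoda-type inclusion $\fm_x^{n(q-1)+1} \subset \fm_x^{[q]}$, so that the ``$n$ from Seshadri'' and ``$\ell$ from jets'' add rather than multiply. Checking the compatibility of these three ingredients---the Seshadri-to-Frobenius comparison, the trace-dual of the Frobenius jet ideal, and the asymptotic Riemann--Roch needed to produce the sections---is the technical heart of the argument.
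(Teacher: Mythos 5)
Your overall plan---factor through the $\ell$th Frobenius--Seshadri constant and close with the Frobenius trace---matches the paper's strategy, and your description of the trace-dual step (finite flat Frobenius, Grothendieck duality, projection formula) is essentially correct. However, your intermediate threshold $\varepsilon_F^\ell(L;x) > n$ is wrong and breaks both implications. The paper's comparison (Proposition \ref{prop:ms212}) gives $\varepsilon_F^\ell(L;x) \ge \tfrac{\ell+1}{\ell+n}\varepsilon(L;x)$, with equality on $\PP^n$ (Example \ref{ex:projectivespace}), so this factor is sharp; hence $\varepsilon(L;x) > n+\ell$ yields $\varepsilon_F^\ell(L;x) > \ell+1$, not $> n$. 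Your first implication is in fact false: on $X = \PP^{10}$ with $L = \cO(12)$ and $\ell = 1$ one has $\varepsilon(L;x) = 12 > 11 = n+\ell$, yet $\varepsilon_F^1(L;x) = \tfrac{2}{11}\cdot 12 = \tfrac{24}{11} < 10 = n$. Meanwhile $\ell+1$ is exactly the threshold the trace argument needs: $\varepsilon_F^\ell(L;x) > \ell+1$ unwinds to $p^e - 1 > m$, which is what lets you arrange $\omega_X \otimes L^{p^e - m}$ to be globally generated; the threshold $n$ only gives $p^e - 1 > \tfrac{n}{\ell+1}m$, which fails to yield $p^e > m$ when $n < \ell+1$.

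The second gap is that your proposed route to the sharp ideal inclusion does not achieve the additive factor $\ell+n$. Composing the (correct) identity $(\fm_x^{\ell+1})^{[q]} = (\fm_x^{[q]})^{\ell+1}$ with the pigeonhole inclusion $\fm_x^{n(q-1)+1} \subseteq \fm_x^{[q]}$ produces $\fm_x^{(\ell+1)(n(q-1)+1)} \subseteq (\fm_x^{\ell+1})^{[q]}$, whose exponent grows like $n(\ell+1)q$---precisely the multiplicative bound you acknowledged needing to avoid. The paper's Lemma \ref{lem:monomials} proves the strictly stronger inclusion $\fm_x^{\ell q + n(q-1)+1} \subseteq (\fm_x^{\ell+1})^{[q]}$, with exponent $\sim (\ell+n)q$, by a direct monomial division argument (write each exponent $a_i = a_{i,0} + q a_i'$ with $0 \le a_{i,0} \le q-1$ and count), and this refinement cannot be recovered from the two facts you cite. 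Moreover, the paper reaches $\varepsilon_F^\ell(L;x) > \ell+1$ from $\varepsilon(L;x) > n+\ell$ purely combinatorially, via superadditivity of $s(L^m;x)$ and the sharp inclusion; it does not invoke the blow-up model or any asymptotic Riemann--Roch argument, which would be delicate in characteristic $p$ without Kawamata--Viehweg vanishing.
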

Demailly showed this result in characteristic zero using the Kawamata--Viehweg
vanishing theorem \cite[Prop.\ 6.8$(a)$]{Dem92}. Our
contribution is that the same result holds in positive characteristic.
\par As an application of this result, we prove the following:
\begin{customthm*}[\ref{thm:charpn}]
  Let $X$ be a smooth Fano variety of dimension $n$ defined over an
  algebraically closed field of positive characteristic. If there exists a
  closed point $x \in X$ with $\varepsilon(\omega_X^{-1};x) \ge n+1$, then $X$
  is isomorphic to the $n$-dimensional projective space~$\PP^n$.
\end{customthm*}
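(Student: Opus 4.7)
The plan is to combine the Seshadri hypothesis with Mori's theorem on rational curves to exhibit a minimal rational curve through $x$, and then to invoke a Cho--Miyaoka--Shepherd-Barron-type characterization of $\PP^n$. Theorem~\ref{thm:sepljetsdem} enters to supply, in positive characteristic, the jet-separation input that in characteristic zero (in the argument of Bauer and Szemberg) is furnished by Kawamata--Viehweg vanishing.

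Writing $\varepsilon(\omega_X^{-1};x)$ as the infimum of slopes $(-K_X \cdot C)/\mult_x C$ over irreducible curves $C \ni x$, the hypothesis gives
\[
  -K_X \cdot C \ge (n+1)\,\mult_x C
\]
for every such $C$. Mori's theorem on rational curves---originally proved in positive characteristic via Frobenius bend-and-break---then produces a rational curve $C_0$ through $x$ with $0 < -K_X \cdot C_0 \le n+1$. Comparing these two inequalities forces $-K_X \cdot C_0 = n+1$ and $\mult_x C_0 = 1$, so $C_0$ is smooth at $x$ and of minimal anticanonical degree among rational curves through $x$.

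Separately, applying Theorem~\ref{thm:sepljetsdem} to $L = \omega_X^{-2}$ and using $\varepsilon(\omega_X^{-2};x) \ge 2(n+1) > n+(n+1)$ shows that $\omega_X^{-1}$ separates $(n+1)$-jets at $x$. I would then complete the proof by a CMSB-type analysis: deformations of the minimal rational curve $C_0$ through $x$ form a family whose tangent directions at $x$ span $T_x X$, and the jet-separation of $\omega_X^{-1}$ supplies sections of the anticanonical bundle with prescribed behavior at $x$ that, together with this family, identify $X$ with $\PP^n$.

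The principal obstacle is this last step. The CMSB characterization of $\PP^n$---that a smooth Fano variety with pseudo-index $n+1$ is $\PP^n$---was proved in characteristic zero by Cho--Miyaoka--Shepherd-Barron, with a streamlined proof by Kebekus using families of minimal rational curves. Extending such an argument to characteristic $p>0$ must contend with possible pathologies (inseparability of the evaluation morphism on the family, non-reducedness of parameter spaces), and the jet-separation estimate from Theorem~\ref{thm:sepljetsdem} is precisely the ingredient designed to bypass these pathologies by providing enough local positivity of $\omega_X^{-1}$ at $x$.
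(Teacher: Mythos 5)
Your opening move is the same as the paper's: applying Theorem~\ref{thm:sepljetsdem} to $L = \omega_X^{-2}$ with $\ell = n+1$ gives $\varepsilon(\omega_X^{-2};x) = 2(n+1) > 2n+1$ and hence that $\omega_X^{-1}$ separates $(n+1)$-jets at $x$. (In the paper this is packaged in Lemma~\ref{lem:seshineq}, which also gives the upper bound $\varepsilon(\omega_X^{-1};x) \le n+1$, so equality holds.) After that, however, you pivot to a strategy that has a genuine gap, and you yourself flag it: you want to run a Cho--Miyaoka--Shepherd-Barron-type argument via families of minimal rational curves, but CMSB and Kebekus's refinement are characteristic-zero results, and you do not explain how the jet-separation conclusion actually repairs the inseparability and non-reducedness pathologies you mention. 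As written, ``the jet-separation estimate is precisely the ingredient designed to bypass these pathologies'' is a hope, not an argument; the bend-and-break production of a curve $C_0$ with $-K_X\cdot C_0 = n+1$ and $\mult_x C_0 = 1$ is correct but is never used in a way that closes the proof.

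The paper avoids this dead end entirely. From the jet separation it deduces, via property $(\ref{property:bundlepp2})$ of the bundle of principal parts, that $\sP^{n+1}(\omega_X^{-1})$ is globally generated at $x$. Lemma~\ref{lem:detofpp} then gives $\det\bigl(\sP^{n+1}(\omega_X^{-1})\bigr) \cong \bigl(\omega_X^{n+1}\otimes (\omega_X^{-1})^{n+1}\bigr)^{\frac{1}{n+1}\binom{2n+1}{n}} \cong \cO_X$, and a vector bundle globally generated at one point with trivial determinant on a projective variety with $H^0(X,\cO_X)=k$ is trivial. Dualizing the principal-parts exact sequence~\eqref{eq:sesbundlepp} exhibits $(\Sym^{n+1}\Omega_X)^\vee \otimes \omega_X$ as a quotient of the trivial bundle $\sP^{n+1}(\omega_X^{-1})^\vee$, hence globally generated. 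Pulling back along any nonconstant $f\colon \PP^1 \to X$ and writing $f^*T_X \cong \bigoplus \cO(a_i)$, $f^*\omega_X^{-1}\cong\cO(b)$ with $b>0$, one gets a globally generated quotient $\bigoplus_i \cO\bigl((n+1)a_i - b\bigr)$, so $(n+1)a_i \ge b > 0$ and every $a_i > 0$. Mori's characterization of $\PP^n$ (\cite[Thm.\ V.3.2]{Kol96}), which is valid in positive characteristic, then concludes. The key idea you are missing is this passage through bundles of principal parts and their determinant: it converts the pointwise jet-separation into a global statement about $T_X$ restricted to \emph{all} rational curves, which is exactly what Mori's (characteristic-free) criterion requires, and so no CMSB-style analysis of families of rational curves is needed at all.
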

\par Bauer and Szemberg showed the analogous statement in characteristic zero
as an application of the proof of Demailly's result \cite[Thm.\ 2]{BS09}. One
interesting feature of this theorem is that it only
requires a positivity condition on the anti-canonical bundle $\omega_X^{-1}$ at
\emph{one} point $x \in X$. In characteristic zero, Liu and Zhuang
in \cite[Thm.~2]{LZ16} generalized \cite[Thm.\ 2]{BS09} to $\QQ$-Fano
varieties; see Remark \ref{rem:lz16} for a comparison between their result and
Theorem \ref{thm:charpn}.
\par There is an interesting connection between Theorem \ref{thm:charpn}
and the Mori--Mukai conjecture, which states that if
$X$ is a smooth Fano variety of dimension $n$ such that the canonical divisor $K_X$
satisfies $(-K_X \cdot C) \ge n+1$ for all rational curves $C \subseteq X$,
then $X$ is isomorphic to $\PP^n$. In characteristic zero, Cho, Miyaoka, and
Shepherd-Barron's proof of the conjecture \cite[Cor.\ 0.4]{CMSB02} implies
\cite[Thm.\ 2]{BS09}; see
Proposition \ref{prop:charpnallpts}$(\ref{prop:charpnallptschar0})$. In positive
characteristic, the conjecture is still open, so instead one must assume
the lower bound in Theorem \ref{thm:charpn} holds at \emph{all} closed
points in order to use a weaker result due to Kachi and Koll\'ar \cite[Cor.\
3]{KK00}; see
Proposition \ref{prop:charpnallpts}$(\ref{prop:charpnallptscharp})$.
\par On the other hand, this connection raises the question of whether the
opposite relationship holds. More precisely, we ask the following:
\begin{question*}
  Let $X$ be a smooth Fano variety of dimension $n$ defined over an
  algebraically closed field. If
  $(-K_X \cdot C) \ge n+1$
  for every rational curve $C \subseteq X$, then is there a closed
  point $x \in X$ with
  $(-K_X \cdot C) \ge (\mult_x C) \cdot (n+1)$
  for all reduced and irreducible curves $C \subseteq X$ passing through $x$?
\end{question*}
\par This latter condition is equivalent to having
$\varepsilon(\omega_X^{-1};x) \ge n+1$ by \cite[Prop.\ 5.1.5]{Laz04}. In
characteristic zero, \cite[Cor.\ 0.4]{CMSB02} answers this question
affirmatively. If one could do this independently of their result, then
\cite[Thm.\ 2]{BS09} would give an alternative proof of the Mori--Mukai
conjecture in characteristic zero, and Theorem \ref{thm:charpn} would
resolve the conjecture in positive characteristic. See
\S\ref{s:charpncomparison} for further discussion.
\par The proofs of Theorems \ref{thm:sepljetsdem} and \ref{thm:charpn}
use a new variant of the
Seshadri constant. To motivate our definition, we recall the
following alternative characterization of the Seshadri constant in terms of
separation of jets \cite[Prop.\ 5.1.17]{Laz04}: if for each integer
$m$, we denote by $s(L^m;x)$ the largest integer $\ell$ such that $L^m$
separates $\ell$-jets, then we have the equalities
\[
  \varepsilon(L;x) = \sup_{m \ge 1} \frac{s(L^m;x)}{m} = \lim_{m \to \infty}
  \frac{s(L^m;x)}{m}.
\]
In positive characteristic, Musta\c{t}\u{a} and Schwede \cite{MS14} defined
the Frobe\-nius--Seshadri constant essentially by replacing
ordinary powers of $\fm_x$ in the definition of separation of jets
\eqref{eq:defsepjets} with Frobenius powers, in order to
take advantage of the Frobenius morphism. Using their definition, Musta\c{t}\u{a}
and Schwede were able to recover Theorem~\ref{thm:sepljetsdem} when
$\ell=0$, and deduce global generation and very ampleness results for adjoint
bundles \cite[Thm.\ 3.1]{MS14}. However, the full statement of
Theorem \ref{thm:sepljetsdem} remained out of reach; see
Remark \ref{rem:comparisonwithms}.
\par Our solution is to introduce higher-order variants of the
Frobenius--Seshadri constant, which mix both ordinary and Frobenius powers of
$\fm_x$. This allows us to more directly deduce separation of higher-order jets.
For each integer $\ell \ge 0$ and $m \ge 1$, let $s_F^\ell(L^m;x)$ be the
largest integer $e \ge 0$ such that the restriction map
\[
  H^0(X,L^m) \longrightarrow H^0\bigl(X,L^m \otimes
  \cO_X/(\fm_x^{\ell+1})^{[p^e]}\bigr)
\]
is surjective, where $(\fm_x^{\ell+1})^{[p^e]}$ denotes the $e$th Frobenius
power of $\fm_x^{\ell+1}$. Then, the
\textsl{$\ell$th Frobenius--Seshadri constant} of $L$ at $x$ is
\[
  \varepsilon_F^\ell(L;x) \coloneqq \sup_{m \ge 1}
  \frac{p^{s_F^\ell(L^m;x)} - 1}{m/(\ell+1)} = \limsup_{m \to \infty}
  \frac{p^{s_F^\ell(L^m;x)} - 1}{m/(\ell+1)}.
\]
These constants are related to the ordinary Seshadri constant in the following
manner (Proposition \ref{prop:ms212}):
\begin{equation}\label{eq:compineq}
  \frac{\ell+1}{\ell+n} \cdot \varepsilon(L;x) \le \varepsilon_F^\ell(L;x) \le
  \varepsilon(L;x).
\end{equation}
Note that the $\ell$th Frobenius--Seshadri constant $\varepsilon_F^\ell(L;x)$
converges to the ordinary Seshadri constant $\varepsilon(L;x)$ as $\ell \to
\infty$.
\par Using the $\ell$th Frobenius--Seshadri constant, we
prove the following statement en route to proving Theorem\ \ref{thm:sepljetsdem}:
\begin{customthm*}[\ref{thm:sepljets}]
  Let $L$ be an ample line bundle on a smooth projective variety $X$ defined
  over an algebraically closed field of positive characteristic.
  If $\varepsilon_F^\ell(L;x) > \ell + 1$ at a closed point $x \in X$,
  then $\omega_X \otimes L$ separates $\ell$-jets at $x$.
\end{customthm*}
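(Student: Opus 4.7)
The plan is to descend separation of $\ell$-jets from $L^m$, where the Frobenius--Seshadri hypothesis supplies it, down to $\omega_X \otimes L$ via the Grothendieck--Frobenius trace $F^e_*\omega_X \twoheadrightarrow \omega_X$. This generalizes Musta\c{t}\u{a} and Schwede's argument for the case $\ell = 0$ by carrying the entire $\fm_x^{\ell+1}$-filtration through the Frobenius.

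To unpack the hypothesis, I use the equality $\sup = \limsup$ in the definition of $\varepsilon_F^\ell(L;x)$: the strict inequality $\varepsilon_F^\ell(L;x) > \ell + 1$ produces, for some fixed $\delta > 0$, arbitrarily large integers $m$ together with $e = s_F^\ell(L^m;x)$ satisfying $p^e - 1 > (1 + \delta/(\ell+1))\, m$. Writing $a \coloneqq p^e - m$, the integer $a$ then grows linearly with $m$, so by taking $m$ large enough I may invoke Serre's theorem to select a section $\sigma \in H^0(X, \omega_X \otimes L^a)$ with $\sigma(x) \ne 0$. By the definition of $s_F^\ell$, the restriction
\[
H^0(X, L^m) \twoheadrightarrow H^0\bigl(X, L^m \otimes \cO_X/(\fm_x^{\ell+1})^{[p^e]}\bigr)
\]
is surjective. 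Tensoring with $\sigma$ gives a sheaf map $L^m \to \omega_X \otimes L^{p^e}$ which, after reducing modulo $(\fm_x^{\ell+1})^{[p^e]}$, becomes an isomorphism of coherent sheaves supported at $x$ (since $\sigma$ generates $\omega_X \otimes L^a$ at $x$). Composing, the restriction
\[
H^0(X, \omega_X \otimes L^{p^e}) \twoheadrightarrow H^0\bigl(X, (\omega_X \otimes L^{p^e}) \otimes \cO_X/(\fm_x^{\ell+1})^{[p^e]}\bigr)
\]
is also surjective.

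Next I bring in the Grothendieck--Frobenius trace $F^e_*\omega_X \twoheadrightarrow \omega_X$. Twisting by $L$ and invoking the projection formula yields a surjection $F^e_*(\omega_X \otimes L^{p^e}) \cong F^e_*\omega_X \otimes L \twoheadrightarrow \omega_X \otimes L$. The essential local input is that, under the $F^e_*$-module structure on $F^e_*\omega_X$, the submodule $\fm_x^{\ell+1} \cdot F^e_*\omega_X$ agrees with $(\fm_x^{\ell+1})^{[p^e]} \omega_X$ viewed as a subsheaf of $\omega_X$; consequently, reducing the trace modulo $\fm_x^{\ell+1}$ produces a surjection of skyscraper sheaves at $x$,
\[
(\omega_X \otimes L^{p^e})/(\fm_x^{\ell+1})^{[p^e]}(\omega_X \otimes L^{p^e}) \twoheadrightarrow (\omega_X \otimes L)/\fm_x^{\ell+1}(\omega_X \otimes L).
\]
Combining the previous surjection with this one and chasing the commutative square formed by the trace and the two restriction maps forces the desired surjection $H^0(X, \omega_X \otimes L) \twoheadrightarrow H^0(X, \omega_X \otimes L \otimes \cO_X/\fm_x^{\ell+1})$, proving that $\omega_X \otimes L$ separates $\ell$-jets at $x$.

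The principal obstacle is the local identification $\fm_x^{\ell+1} \cdot F^e_*\omega_X = (\fm_x^{\ell+1})^{[p^e]}\omega_X$ and the verification that the trace induces a surjection on the associated quotients. Both boil down to a concrete calculation at a smooth point, where $F^e_*\omega_X$ is free over $\cO_X$ and the trace acts by reading off the top-monomial coefficient in a regular system of parameters. Once this is in hand, the rest is essentially formal, relying only on the flexibility in $m$ provided by the limsup reformulation of $\varepsilon_F^\ell$ and on the existence of the auxiliary section $\sigma$ secured by Serre's theorem.
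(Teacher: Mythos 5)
Your argument is correct and follows the same strategy as the paper's: reformulate the hypothesis to obtain $m,e$ with $p^e - m$ arbitrarily large, tensor with a section of $\omega_X \otimes L^{p^e-m}$ nonvanishing at $x$ to lift surjectivity to $\omega_X \otimes L^{p^e}$, and then push down via the iterated trace $T^e\colon F^e_*\omega_X \to \omega_X$ using the identity $T^e\bigl(F^e_*\bigl((\fm_x^{\ell+1})^{[p^e]}\omega_X\bigr)\bigr) = \fm_x^{\ell+1}\omega_X$. The only cosmetic difference is that you invoke Serre's theorem for a single nonvanishing section where the paper packages the same step as Lemma~\ref{lem:tensorwithgg} applied to the globally generated bundle $\omega_X \otimes L^{p^e-m}$.
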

\noindent By the comparison \eqref{eq:compineq}, Theorem
\ref{thm:sepljets} immediately implies Theorem \ref{thm:sepljetsdem}. Our proof
of Theorem \ref{thm:sepljets} also works for singular varieties that are
\textsl{$F$-injective}; see Remark \ref{rem:singvar}. $F$-injective varieties
are related to varieties with Du Bois singularities in characteristic zero
\cite{Sch09a}.
\par Given Theorem \ref{thm:sepljets}, it would be very interesting
to have non-trivial lower bounds for any of the aforementioned versions of the
Seshadri constant at very general points of $X$. In
characteristic zero, it is conjectured that
$\varepsilon(L;x) \ge 1$ at all very general $x \in X$. This is known if $n
= \dim X =
2$ \cite{EL93}; if $n \ge 3$, then only the lower bound $1/n$ is known
\cite[Thm.\ 1]{EKL95}. However, the proofs of both results rely heavily
on the characteristic zero assumption, and in arbitrary characteristic, we are
only aware of the lower bound $\varepsilon(L;x) \ge 2/\bigl(1 +
\sqrt{4\sigma(L)+13}\bigr)$ for arbitrary points on surfaces, where
\[
  \sigma(L) \coloneqq \inf\bigl\{s \in \RR \bigm\vert sL - K_X\ \text{is
  nef}\bigr\}
\]
is the \textsl{canonical slope} of $L$ \cite[Thm.\ 3.1]{Bau99}.
\subsection*{Outline}
Our paper is structured as follows: In \S\ref{s:defs}, we define the $\ell$th
Frobenius--Seshadri constant, and prove its basic properties. Most of what
we prove is modeled after \cite[\S2]{MS14}, which studies what would be the
zeroth Frobenius--Seshadri constant in our notation. In \S\ref{s:adjoint}, we
prove Theorem \ref{thm:sepljets}. The main technical tool is the trace map
$T\colon F_*\omega_X \to \omega_X$ associated to the (absolute) Frobenius
morphism. Finally, in \S\ref{s:charpn}, we prove Theorem \ref{thm:charpn},
following \cite{BS09}.
\subsection*{Notation}
A \textsl{variety} is a reduced and irreducible separated scheme of finite type
defined over an algebraically closed field $k$ of characteristic $p > 0$, unless
stated otherwise. We denote by $X$ a positive-dimensional projective variety,
and denote by $F\colon X \to X$ the
\textsl{(absolute) Frobenius morphism,} which is given by the identity map on
points, and the $p$-power map
\[
  \begin{tikzcd}[row sep=0,column sep=1.6em]
    \cO_X(U) \rar & F_*\cO_X(U)\\
    f \rar[mapsto] & f^p
  \end{tikzcd}
\]
on structure sheaves, where $U \subseteq X$ is an open set.
If $\fa \subseteq \cO_X$ is a coherent ideal sheaf, we define the \textsl{$e$th
Frobenius power} $\fa^{[p^e]}$ to be the inverse image of
$\fa$ via the $e$th iterate of the Frobenius morphism.
Locally, if $\fa$ is generated by $(h_i)_{i \in I}$, then $\fa^{[p^e]}$ is
generated by $(h_i^{p^e})_{i \in I}$.
If $X$ is smooth, we denote by $\omega_X$ the canonical bundle on $X$ and
$K_X$ the canonical divisor on $X$.
\section{Definitions and preliminaries}\label{s:defs}
We start by recalling the definition of the (ordinary) Seshadri constant of a
line bundle $L$ at a point.
We adopt the ``separation of jets'' description of the Seshadri constant
as our definition, which is equivalent to the other definitions when the line
bundle $L$ is ample and the closed point $x$ is smooth
\cite[Prop.\ 5.1.17]{Laz04}.
\begin{definition}
  Let $L$ be a line bundle on a projective variety $X$, and let $x \in X$ be a
  closed point with defining ideal $\fm_x \subset \cO_X$. For all
  integers $\ell \ge 0$ and $m \ge 1$, we say that $L^m$ \textsl{separates
  $\ell$-jets at $x$} if the restriction map
  \[
    \rho^{\ell,0}_{L^m}\colon H^0(X,L^m) \longrightarrow H^0(X,L^m \otimes
    \cO_X/\fm_x^{\ell+1})
  \]
  is surjective. Let $s(L^m;x)$ be the largest integer $\ell \ge 0$ such
  that $L^m$ separates $\ell$-jets at $x$; if no such $\ell$
  exists, set $s(L^m;x) = -\infty$. The
  \textsl{Seshadri constant} of $L$ at $x$ is
  \begin{equation}\label{eq:defordseshadri}
    \varepsilon(L;x) \coloneqq \sup_{m \ge 1} \frac{s(L^m;x)}{m}.
  \end{equation}
\end{definition}
\par We now give our main definition, which is modeled after the above
interpretation
of the Seshadri constant in terms of separation of jets. This
definition combines both ordinary and Frobenius powers of the ideal $\fm_x$.
Compared to the Frobenius--Seshadri constant defined in \cite[Def.\ 2.4]{MS14},
our definition has the advantage of directly encoding information about
higher-order jets; see Remark \ref{rem:comparisonwithms}.
\begin{definition}
  Let $L$ be a line bundle on a projective variety $X$, and let $x \in X$ be a
  closed point with defining ideal $\fm_x \subset \cO_X$. For all
  integers $\ell,e \ge 0$ and $m \ge 1$, we say that $L^m$ \textsl{separates
  $p^e$-Frobenius $\ell$-jets at $x$} if the restriction map
  \[
    \rho^{\ell,e}_{L^m}\colon H^0(X,L^m) \longrightarrow H^0\bigl(X,L^m \otimes
    \cO_X/(\fm_x^{\ell+1})^{[p^e]}\bigr)
  \]
  is surjective. Let $s^\ell_F(L^m;x)$ be the largest integer $e \ge 0$ such
  that $L^m$ separates $p^e$-Frobenius $\ell$-jets at $x$; if no such $e$
  exists, set $s^\ell_F(L^m;x) = -\infty$. The
  \textsl{$\ell$th Frobenius--Seshadri constant} of $L$ at $x$ is
  \begin{equation}\label{eq:lthfrobsesh}
    \varepsilon^\ell_F(L;x) \coloneqq \sup_{m \ge 1} \frac{p^{s^\ell_F(L^m;x)} -
    1}{m/(\ell+1)}.
  \end{equation}
  We refer to the constants $\varepsilon^\ell_F(L;x)$ as
  \textsl{Frobenius--Seshadri constants}. Note that the zeroth
  Frobenius--Seshadri constant $\varepsilon^0_F(L;x)$ is
  the Frobenius--Seshadri constant defined in \cite[Def.\ 2.4]{MS14}.
\end{definition}
\par In the rest of this section, we will prove basic formal properties about
Frobenius--Seshadri constants, following \cite[\S2]{MS14}. The only
statements used explicitly in later sections are Lemmas \ref{lem:tensorpower}
and \ref{lem:tensorwithgg}, and Propositions \ref{prop:ms26}$(\ref{prop:ms26i})$
and \ref{prop:ms212}.
\subsection{Separation of Frobenius jets under tensor powers}
For the ordinary Seshadri constant, the supremum in \eqref{eq:defordseshadri}
is actually a limit when $L$ is ample \cite[p.\ 97]{Dem92}. This property follows
from Fekete's lemma \cite[Pt.\ I, n\textsuperscript{o} 98]{PS98}, since for all
positive integers $m$ and $n$, the sequence $s(L^m;x)$ satisfies the
superadditivity property (see, e.g., \cite[Lem.\ 3.7]{Ito13} for a proof)
\begin{equation}\label{eq:superadditive}
  s(L^{m+n};x) \ge s(L^m;x) + s(L^n;x).
\end{equation}
For Frobenius--Seshadri constants, we cannot have an analogous property, since
the supremum in \eqref{eq:lthfrobsesh} may not be a limit; see
Example \ref{ex:projectivespace}.
Our first goal is to find a replacement for this superadditivity property. This
will allow us to show that the supremum in \eqref{eq:lthfrobsesh} is actually a
limit supremum.
\par We start with the following observation about ideals in a ring
of characteristic $p > 0$, which will also be useful later. Note that the first
inclusion in \eqref{eq:calemincl} is a slight improvement on
\cite[Lem.\ 4.6]{Sch09}.
\begin{lemma}\label{lem:monomials}
  Let $R$ be a commutative ring of characteristic $p > 0$. Then, for any ideal
  $\fa$ generated by $n$ elements and for any non-negative integers $e$ and
  $\ell$, we have the sequence of inclusions
  \begin{equation}
    \fa^{\ell p^e+n(p^e-1)+1} \subseteq (\fa^{\ell+1})^{[p^e]} \subseteq
    \fa^{(\ell+1)p^e}.\label{eq:calemincl}
  \end{equation}
  Moreover, if $R$ is a regular local ring of dimension $n$, and
  $\fa$ is the maximal ideal of $R$, then
  \[
    \fa^{\ell p^e+n(p^e-1)} \not\subseteq (\fa^{\ell+1})^{[p^e]}.
  \]
\end{lemma}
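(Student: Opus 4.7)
The plan is to reduce both inclusions to statements about products of the generators $h_1,\dots,h_n$ and then identify the non-inclusion with an explicit monomial in a regular system of parameters. The key numerical idea is the division algorithm for exponents by $p^e$: any exponent $b_i$ decomposes uniquely as $b_i = a_i p^e + r_i$ with $0 \le r_i \le p^e - 1$, so the ``leftover'' coming from remainders is bounded by $n(p^e-1)$.

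For the second inclusion, I would observe that $(\fa^{\ell+1})^{[p^e]}$ is generated by the monomials $h_1^{a_1 p^e}\cdots h_n^{a_n p^e}$ with $a_1+\cdots+a_n = \ell+1$, each of which has total degree $(\ell+1)p^e$ in the $h_i$ and so lies in $\fa^{(\ell+1)p^e}$. For the first inclusion, I would take a monomial generator $h_1^{b_1}\cdots h_n^{b_n}$ of $\fa^{\ell p^e+n(p^e-1)+1}$, write $b_i = a_i p^e + r_i$ as above, and factor it as $(h_1^{a_1}\cdots h_n^{a_n})^{p^e}\cdot h_1^{r_1}\cdots h_n^{r_n}$. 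Since $\sum r_i \le n(p^e-1)$, the hypothesis gives $p^e\sum a_i \ge \ell p^e + 1$, so $\sum a_i \ge \ell + 1$ (the quantity is an integer strictly greater than $\ell$). Thus $h_1^{a_1}\cdots h_n^{a_n} \in \fa^{\ell+1}$, and its $p^e$-th power lies in $(\fa^{\ell+1})^{[p^e]}$, so the whole product does as well.

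For the non-inclusion, assume $R$ is a regular local ring of dimension $n$ with regular system of parameters $x_1,\dots,x_n$, and consider the explicit monomial
\[
  M \coloneqq x_1^{(\ell+1)p^e - 1}\,x_2^{p^e-1}\cdots x_n^{p^e-1}.
\]
A degree count gives $(\ell+1)p^e - 1 + (n-1)(p^e-1) = \ell p^e + n(p^e-1)$, so $M \in \fm^{\ell p^e + n(p^e-1)}$. To show $M \notin (\fm^{\ell+1})^{[p^e]}$, I would pass to the completion $\hat R \cong \kappa[[x_1,\dots,x_n]]$ (Cohen structure theorem, since $R$ is equicharacteristic). Faithful flatness of $R \to \hat R$ reduces the claim to showing $M \notin (\fm^{\ell+1})^{[p^e]}\hat R$, and this latter ideal is a monomial ideal in the power series ring, generated by the $x_1^{a_1 p^e}\cdots x_n^{a_n p^e}$ with $\sum a_i = \ell+1$. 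Since a monomial lies in a monomial ideal of a power series ring iff it is divisible by one of the monomial generators, it suffices to rule out divisibility.

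The main (but mild) obstacle is this last divisibility check, which I expect to handle directly: if $x_1^{a_1 p^e}\cdots x_n^{a_n p^e}$ with $\sum a_i = \ell+1$ divides $M$, then comparing exponents forces $a_j p^e \le p^e-1$ for $j \ge 2$, hence $a_j = 0$, and $a_1 p^e \le (\ell+1)p^e - 1$, hence $a_1 \le \ell$; summing yields $\sum a_i \le \ell$, contradicting $\sum a_i = \ell+1$. This contradiction completes the argument.
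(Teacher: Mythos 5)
Your proof is correct and follows essentially the same strategy as the paper: divide exponents by $p^e$ with remainders to get the first inclusion, observe the second is immediate by degree count, and exhibit the monomial $x_1^{(\ell+1)p^e-1}x_2^{p^e-1}\cdots x_n^{p^e-1}$ (which coincides with the paper's choice $y_{i_0}^{\ell p^e}\prod y_i^{p^e-1}$ when $i_0=1$) and pass to the completion via Cohen's structure theorem for the non-inclusion. The only difference is that you spell out the faithful-flatness step and the monomial-divisibility check in $\kappa[[x_1,\dots,x_n]]$, which the paper leaves implicit.
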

\begin{proof}
  The second inclusion in \eqref{eq:calemincl} is clear; we want to
  show the first inclusion. Let $y_1,y_2,\ldots,y_n$ be a set of generators for
  $\fa$.  The ideal $\fa^{\ell p^e+n(p^e-1)+1}$ is generated
  by all elements of the form
  \begin{alignat}{4}
    &\prod_{i=1}^{n} y_i^{a_i} &&\quad& \text{such that} &\quad 
    \sum_{i=1}^n a_i &{}={}& \ell p^e+n(p^e-1)+1,
    \label{eq:monoordpower}
    \intertext{and the ideal $(\fa^{\ell+1})^{[p^e]}$ is generated by all
    elements of the form}
    &\prod_{i=1}^{n} y_i^{p^e b_i} &&\quad& \text{such that} &\quad 
    \sum_{i=1}^n b_i &{}={}& \ell+1.\label{eq:monomixedpower}
  \end{alignat}
  We want to show that the elements \eqref{eq:monoordpower} are divisible by
  some elements of the form \eqref{eq:monomixedpower}.
  By the division algorithm, we may write $a_i = a_{i,0} + p^e a_{i}'$ for some
  non-negative integers $a_{i,0}$ and $a_i'$
  such that $0 \le a_{i,0} \le p^{e}-1$. Then,
  \[
    \prod_{i=1}^{n} y_i^{a_i} = \prod_{i=1}^n y_i^{a_{i,0}} \cdot
    \prod_{i=1}^n y_i^{p^e a_{i}'},
  \]
  and since $a_{i,0} \le p^e - 1$, we have that $\sum_{i=1}^n a_{i,0} \le n(p^e -
  1)$. Thus, we have the inequality
  \[
    \ell p^e + n(p^e - 1) + 1 = \sum_{i=1}^n a_i \le n(p^e - 1) +
    \sum_{i=1}^n p^e a_i',
  \]
  which implies $\ell + p^{-e} \le \sum_{i=1}^n a_i'$. Since the right-hand side
  of this inequality is an integer, we have that $\ell + 1 \le
  \sum_{i=1}^n a_i'$, i.e., the element $\prod_{i=1}^n y_i^{p^ea_i'}$ is
  divisible by one of the form \eqref{eq:monomixedpower}. Thus, each element of
  the form in \eqref{eq:monoordpower} is divisible by one of the form in
  \eqref{eq:monomixedpower}.
  \par Now suppose $R$ is a regular local ring of dimension $n$, and $\fa$
  is the maximal ideal of $R$. Let $y_1,y_2,\ldots,y_n$ be a regular system of
  parameters. Then, we have
  \[
    y_{i_0}^{\ell p^e} \cdot \prod_{i=1}^n y_i^{p^e-1} \in
    \fa^{\ell p^e+n(p^e-1)}
  \]
  for any $i_0 \in \{1,2,\ldots,n\}$. This monomial
  does not lie in $(\fa^{\ell+1})^{[p^e]}$ since its image is not in the
  extension of $(\fa^{\ell+1})^{[p^e]}$ in the completion of $R$ at $\fa$,
  which is isomorphic to a formal power series ring with variables
  $y_1,y_2,\ldots,y_n$ by the Cohen structure theorem.
\end{proof}
\begin{lemma}[cf.\ {\cite[Lem.\ 2.5]{MS14}}]\label{lem:tensorpower}
  Let $L$ be a line bundle on a projective variety $X$, and let $x \in X$ be a
  closed point. Suppose $L^m$
  separates $p^e$-Frobenius $\ell$-jets at $x$ for some
  integers $m \ge 1$, $e \ge 1$, and $\ell \ge 0$, and denote
  \[
    d_r = \frac{p^{re}-1}{p^e-1}
  \]
  for each positive integer $r$. Then, $L^{md_r}$ separates
  $p^{re}$-Frobenius $\ell$-jets at $x$ for all $r \ge 1$.
\end{lemma}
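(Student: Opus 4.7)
The plan is to argue by induction on $r$, with the base case $r = 1$ being immediate since $d_1 = 1$ reduces the statement to the hypothesis.

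For the inductive step, suppose $L^{md_r}$ separates $p^{re}$-Frobenius $\ell$-jets at $x$. The identity $d_{r+1} = 1 + p^e d_r$ gives $L^{md_{r+1}} = L^m \otimes F^{e*}(L^{md_r})$, so that sheaf multiplication combined with the Frobenius pullback $t \mapsto t^{p^e}$ on sections produces elements $s \cdot t^{p^e} \in H^0(X, L^{md_{r+1}})$ for any $s \in H^0(X, L^m)$ and $t \in H^0(X, L^{md_r})$. I will choose $s_1, \ldots, s_a \in H^0(X, L^m)$ whose images span $V \coloneqq L^m \otimes_{\cO_X} \cO_X/(\fm_x^{\ell+1})^{[p^e]}$ at $x$ (by hypothesis) and $t_1, \ldots, t_b \in H^0(X, L^{md_r})$ whose images span $W \coloneqq L^{md_r} \otimes_{\cO_X} \cO_X/(\fm_x^{\ell+1})^{[p^{re}]}$ at $x$ (by induction); the inductive step then reduces to showing that the $ab$ products $\{s_i t_j^{p^e}\}$ have images spanning $U \coloneqq L^{md_{r+1}} \otimes_{\cO_X} \cO_X/(\fm_x^{\ell+1})^{[p^{(r+1)e}]}$ at $x$.

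The Freshman's Dream $(a+b)^{p^e} = a^{p^e} + b^{p^e}$ in characteristic $p$, combined with the fact that the $p^e$-th power of any element of $(\fm_x^{\ell+1})^{[p^{re}]}$ lies in $(\fm_x^{\ell+1})^{[p^{(r+1)e}]}$, shows that $\tilde t_j^{p^e}$ modulo $(\fm_x^{\ell+1})^{[p^{(r+1)e}]}$ depends only on the class $[t_j] \in W$. Hence the image of $s_i t_j^{p^e}$ in $U$ equals the product, in $\cO_{X,x}/(\fm_x^{\ell+1})^{[p^{(r+1)e}]}$, of the specific lift $[\tilde s_i]$ determined by $s_i$ with this canonical $p^e$-th-power lift of $[t_j]$. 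In the smooth case, after choosing a regular system of parameters $y_1, \ldots, y_n$ and writing $\tilde s_i \equiv y^{\alpha_i} + h_i$ with $h_i \in (\fm_x^{\ell+1})^{[p^e]}$, the product simplifies to $y^{\alpha_i + p^e \beta_j} + h_i \cdot y^{p^e \beta_j}$ modulo $(\fm_x^{\ell+1})^{[p^{(r+1)e}]}$; a base-$p^e$ expansion of exponents, paralleling the analysis in Lemma \ref{lem:monomials}, shows that the ``leading'' monomials $y^{\alpha_i + p^e \beta_j}$ already account for every basis monomial of $U$.

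The main obstacle is handling the correction terms $h_i \cdot y^{p^e \beta_j}$, which generally do not lie in $(\fm_x^{\ell+1})^{[p^{(r+1)e}]}$. I plan to finish by ordering the monomial basis of $U$ using an appropriate Frobenius-adic filtration and running an upper-triangular argument, showing that the matrix expressing $\{\tilde s_i \tilde t_j^{p^e}\}_{i,j}$ against this basis has an invertible leading block, so that the correction entries can be cleared by further linear combinations of the products. The first inclusion in Lemma \ref{lem:monomials} should supply the degree estimates needed to confirm the triangular structure, and the general (not necessarily smooth) case should reduce to this local calculation by passing to the completion $\widehat{\cO_{X,x}}$ and invoking the Cohen structure theorem.
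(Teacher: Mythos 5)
Your strategy is essentially the paper's: induct on $r$, exploit $d_{r+1} = 1 + p^e\,d_r$ together with $(F^e)^*$ to build products $s\cdot t^{p^e}$, observe via the Freshman's Dream that $\tilde t^{p^e}$ is well-defined modulo $(\fm_x^{\ell+1})^{[p^{(r+1)e}]}$, and argue that the leading terms $y^{\alpha+p^e\beta}$ cover everything while the corrections sit deeper in the filtration. Two points need attention before this closes.

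First, the ``upper-triangular'' step is where the real work lies, and you should pin down exactly why the corrections land strictly deeper: if $h\in(\fm_x^{\ell+1})^{[p^e]}$, then every monomial of $h\cdot y^{p^e\beta}$ is divisible by $y^{p^e(\beta+b)}$ with $\sum b_i=\ell+1$, so its ``$S$-value'' $\sum_i\lfloor\text{exponent}_i/p^e\rfloor$ exceeds $|\beta|$ by at least $\ell+1>0$; combined with the fact (via the first inclusion of Lemma~\ref{lem:monomials}) that $S\ge(\ell+n)p^{re}$ forces the monomial to vanish mod $(\fm_x^{\ell+1})^{[p^{(r+1)e}]}$, a \emph{descending} induction on $S$ terminates. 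The paper does precisely this; your sketch gestures at it but the strict $S$-increase by $\ell+1$ is the crux and isn't stated.

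Second, and more seriously, your proposed reduction of the general case to the smooth one does not work as written. At a non-smooth point, $\widehat{\cO_{X,x}}$ is a \emph{proper quotient} of a power series ring, the monomials $y^\gamma$ are only a spanning set rather than a basis, and your sections $s_i$ do not lift to sections spanning the corresponding quotient of the power series ring. Your ``matrix with an invertible leading block'' framing implicitly presupposes a basis, and that is precisely where smoothness enters. The paper sidesteps this entirely: it never asserts a monomial basis, but instead shows that \emph{every} monomial $y^{a_1}\cdots y^{a_n}$ of the (possibly redundant) spanning set lies in the image, by the descending induction on $S$. If you replace the invertible-block argument with that per-monomial spanning argument, both the smoothness hypothesis and the appeal to the Cohen structure theorem disappear, and the proof works for an arbitrary closed point on an arbitrary projective variety as the lemma requires.
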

\begin{proof}
  We want to show the restriction maps
  \[
    \varphi_r \colon H^0(X,L^{md_r}) \longrightarrow H^0\bigl(X,L^{md_r}
    \otimes \cO_X/(\fm_x^{\ell+1})^{[p^{re}]}\bigr)
  \]
  are surjective for all positive integers $r$. We prove this by induction on
  $r$. The case $r = 1$ is true by assumption, so we consider the case when $r
  \ge 2$.
  \par Let $\widetilde{y}_1,\widetilde{y}_2,\ldots,\widetilde{y}_n$ generate
  $\fm_x\cdot\cO_{X,x}$. After choosing an isomorphism $L^{m}_x \simeq
  \cO_{X,x}$, we can make the identification
  \[
    L^{md_r} \otimes \cO_X/(\fm_x^{\ell+1})^{[p^{re}]} \simeq
    \cO_X/(\fm_x^{\ell+1})^{[p^{re}]}.
  \]
  Then, $L^{md_r} \otimes \cO_X/(\fm_x^{\ell+1})^{[p^{re}]}$ is
  generated as a vector space over $k$ by the residue classes
  \begin{equation}\label{eq:highermonomials}
    y_1^{a_1} y_2^{a_2} \cdots y_n^{a_n} \in L^{md_r} \otimes
    \cO_X/(\fm_x^{\ell+1})^{[p^{re}]}
  \end{equation}
  of the monomials $\widetilde{y}_1^{a_1}\widetilde{y}_2^{a_2} \cdots
  \widetilde{y}_n^{a_n} \in L^{md_r}_x$, where the $a_i$ can be any
  non-negative integers.
  As in the proof of Lemma \ref{lem:monomials}, write $a_i = a_{i,0} + p^e a_i'$
  for each $i$, where $0 \le a_{i,0} \le p^e-1$, so that
  \[
    y_1^{a_1} y_2^{a_2} \cdots y_n^{a_n} = \prod_{i=1}^n y_i^{a_{i,0}} \cdot
    \prod_{i=1}^n y_i^{p^ea_{i}'}.
  \]
  We will show that the
  elements in \eqref{eq:highermonomials} lie in the image of $\varphi_r$
  by descending induction on $S \coloneqq \sum_{i=1}^n a_i'$. By
  Lemma \ref{lem:monomials}, if $S \ge (\ell+n) p^{(r-1)e}$, then
  $y_1^{a_1} y_2^{a_2} \cdots y_n^{a_n} \equiv 0 \bmod
  (\fm_x^{\ell+1})^{[p^{re}]}$, and so there is nothing to show. It therefore
  suffices to consider the inductive case, when $S \le (\ell+n) p^{(r-1)e} - 1$.
  \par By the assumption that $\varphi_1$ is surjective, we know that there
  exists $t_1 \in H^0(X,L^{m})$ such that its germ $t_{1,x} \in L_x^m$ satisfies
  \begin{equation}\label{eq:t1lift}
    \prod_{i=1}^n \widetilde{y}_i^{a_{i,0}} - t_{1,x} \in (\fm_x^{\ell+1})^{[p^e]} \otimes
    L^{m}_x.
  \end{equation}
  By the inductive hypothesis with respect to $r$, we know that
  $\varphi_{r-1}$ is surjective, so there exists $t_2 \in
  H^0(X,L^{md_{r-1}})$ such that its germ $t_{2,x} \in L_x^{md_{r-1}}$ satisfies
  \[
    \prod_{i=1}^n \widetilde{y}_i^{a_{i}'} - t_{2,x} \in (\fm_x^{\ell+1})^{[p^{(r-1)e}]}
    \otimes L^{md_{r-1}}_x.
  \]
  Now consider the composition below:
  \[
    \begin{tikzpicture}
      \matrix(m)[
        matrix of math nodes,
        row sep=-0.1cm,
        column sep=-0.15cm,
        column 1/.style={anchor=base east},
        column 2/.style={anchor=base},
        column 3/.style={anchor=base west},
        column 5/.style={anchor=base east},
        column 6/.style={anchor=base},
        column 7/.style={anchor=base west},
        column 9/.style={anchor=base}
      ]{
        H^0(X,L^m) & \otimes & H^0(X,L^{md_{r-1}})    & \hspace{1.25cm} &
        H^0(X,L^m) & \otimes & H^0(X,L^{mp^ed_{r-1}}) & \hspace{1.25cm} &
        H^0(X,L^{md_r})\\
        t_1 & \otimes & t_2 & & t_1 & \otimes & t_2^{p^e} & & t_1t_2^{p^e}\\
      };
      \path[commutative diagrams/.cd, every arrow, every label]
        (m-1-3) edge node{$1 \otimes (F^e)^*$}    (m-1-5)
        (m-1-7) edge node{$\text{mult}$}          (m-1-9)
        (m-2-3) edge[commutative diagrams/mapsto] (m-2-5)
        (m-2-7) edge[commutative diagrams/mapsto] (m-2-9);
    \end{tikzpicture}
  \]
  Note that $t_1t_2^{p^e}$ restricts to $t_{1,x}t_{2,x}^{p^e}$ in
  the stalk $L^{md_r}_x$, and that
  \begin{equation}\label{eq:liftafterfrob}
    \prod_{i=1}^n \widetilde{y}_i^{p^ea_{i}'} - t_{2,x}^{p^e} \in
    (\fm_x^{\ell+1})^{[p^{re}]} \otimes L^{mp^e d_{r-1}}_x.
  \end{equation}
  Then, after passing to the stalk $L^{md_{r}}_x$, we have
  \begin{align*}
    \prod_{i=1}^n \widetilde{y}_i^{a_i} - t_{1,x}t_{2,x}^{p^e} &= \prod_{i=1}^n \widetilde{y}_i^{a_i} -
    t_{1,x}\prod_{i=1}^n \widetilde{y}_i^{p^ea_i'} + t_{1,x}\prod_{i=1}^n \widetilde{y}_i^{p^ea_i'} -
    t_{1,x}t_{2,x}^{p^e}\\
    &= \Biggl(\prod_{i=1}^n \widetilde{y}_i^{a_{i,0}} - t_{1,x}\Biggr) \cdot \prod_{i=1}^n
    \widetilde{y}_i^{p^ea_i'} + t_{1,x} \Biggl( \prod_{i=1}^n \widetilde{y}_i^{p^ea_i'} -
    t_{2,x}^{p^e} \Biggr).
  \end{align*}
  To show that $\prod_{i=1}^n y_i^{a_i}$ is in the image of $\varphi_r$, it
  suffices to show that the right-hand side of this equation is in the image of
  $\varphi_r$ modulo $(\fm_x^{\ell+1})^{[p^{re}]}$. By \eqref{eq:liftafterfrob}, we know the second term is congruent
  to zero modulo $(\fm_x^{\ell+1})^{[p^{re}]}$, and so it remains to show the
  first term is in the image of $\varphi_r$ modulo $(\fm_x^{\ell+1})^{[p^{re}]}$.
  \par First, for each monomial $\mu$ in the $\widetilde{y}_i$ that appears in the
  difference $\prod_{i=1}^n
  \widetilde{y}_i^{a_{i,0}} - t_{1,x}$, there exists some $n$-tuple 
  $(b_i)_{1\le i \le n}$ where $\sum_{i=1}^n b_i = \ell + 1$ such that
  $\prod_{i=1}^n \widetilde{y}_i^{p^eb_i}$ divides $\mu$ by \eqref{eq:t1lift}. Thus, the
  corresponding monomial that appears in the product
  \begin{equation}\label{eq:isitinimage}
    \Biggl(\prod_{i=1}^n \widetilde{y}_i^{a_{i,0}} - t_{1,x}\Biggr) \cdot \prod_{i=1}^n
    \widetilde{y}_i^{p^ea_i'}
  \end{equation}
  is divisible by the product $\prod_{i=1}^n \widetilde{y}_i^{p^e(a_i'+b_i)}$. Since
  \[
    \sum_{i=1}^n (a_i'+b_i) = S + \ell +1 > S,
  \]
  each monomial that appears in the
  product \eqref{eq:isitinimage} is therefore in the image of $\varphi_r$ modulo
  $(\fm_x^{\ell+1})^{[p^{re}]}$ by the inductive hypothesis on $S$.
\end{proof}
This allows us to show that the supremum in \eqref{eq:lthfrobsesh} can actually
be computed as a limit supremum.
\begin{proposition}[cf.\ {\cite[Prop.\ 2.6]{MS14}}]\label{prop:ms26}
  Let $\ell \ge 0$ be fixed, and let $L$ be an ample line bundle on a projective
  variety $X$. Let $x \in X$ be a closed point.
  \begin{enumerate}[label=$(\roman*)$,ref=\ensuremath{\roman*}]
    \item\label{prop:ms26i} The line bundle $L^m$ separates $p^e$-Frobenius
      $\ell$-jets at $x$ for some positive integers $m$ and $e$.
    \item\label{prop:ms26ia} We have
      \[
        \varepsilon_F^\ell(L;x) = \sup_{m,e} \frac{p^e-1}{m/(\ell+1)},
      \]
      where the supremum is taken over all positive integers $m$ and $e$ such
      that $L^m$ separates $p^e$-Frobenius $\ell$-jets at $x$.
    \item\label{prop:ms26ii}
      Given any $\delta > 0$, there is a positive integer $e_0$ such that for
      every positive integer $e$ divisible by $e_0$, there is a positive integer
      $m$ such that $L^m$ separates $p^e$-Frobenius $\ell$-jets at $x$ and
      \begin{equation}\label{eq:limsuppropineq}
        \frac{p^e-1}{m/(\ell+1)} > \varepsilon_F^\ell(L;x) - \delta.
      \end{equation}
    \item\label{prop:ms26iii} We have
      \[
        \varepsilon_F^\ell(L;x) = \limsup_{m\to \infty}
        \frac{p^{s_F^\ell(L^m;x)}-1}{m/(\ell+1)}.
      \]
  \end{enumerate}
\end{proposition}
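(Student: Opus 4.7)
The proposition breaks into four parts that form a cascade: (i) makes sure the quantities under discussion are nonvacuous, (ia) is a bookkeeping reformulation of the definition, and (ii)--(iii) refine the defining supremum into a limit supremum via Lemma \ref{lem:tensorpower}. The plan is to model the argument on \cite[Prop.\ 2.6]{MS14}, adjusting for the higher-order setting by keeping track of the factor $\ell+1$ throughout and invoking the first inclusion of Lemma \ref{lem:monomials} at the one step where ordinary and Frobenius powers of $\fm_x$ have to be reconciled.

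For (i), I would fix any $e \ge 1$ and exploit the first inclusion
\[
  \fm_x^{\ell p^e + n(p^e-1)+1} \subseteq (\fm_x^{\ell+1})^{[p^e]}
\]
from Lemma \ref{lem:monomials}, which makes $\cO_X/(\fm_x^{\ell+1})^{[p^e]}$ a quotient of $\cO_X/\fm_x^{\ell p^e + n(p^e-1)+1}$. Since $L$ is ample, Serre vanishing yields $H^1(X,\fm_x^{N+1}\otimes L^m)=0$ for every fixed $N$ and all $m \gg 0$, so the restriction $H^0(X,L^m) \to H^0(X,L^m \otimes \cO_X/\fm_x^{N+1})$ becomes surjective. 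Applying this with $N = \ell p^e + n(p^e-1)$ and composing with the quotient above forces $\rho^{\ell,e}_{L^m}$ to be surjective, which is (i).

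For (ia), observe that for fixed $m$ the $k$-dimension of the target of $\rho^{\ell,e}_{L^m}$ is at least $\dim_k \cO_{X,x}/\fm_x^{(\ell+1)p^e}$ (by the second inclusion in Lemma \ref{lem:monomials}), which grows without bound in $e$, while $h^0(X,L^m)$ remains fixed; hence $s^\ell_F(L^m;x)$ is finite whenever it is nonnegative. For such $m$, the supremum of $(p^e-1)/(m/(\ell+1))$ over those $e$ making $\rho^{\ell,e}_{L^m}$ surjective is achieved at $e = s^\ell_F(L^m;x)$, and by (i) at least one such $m$ exists. Taking the outer supremum therefore recovers both the formula in (ia) and its agreement with the definition of $\varepsilon^\ell_F(L;x)$.

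For (ii), I would use (ia) to pick a pair $(m_1,e_1)$ with $L^{m_1}$ separating $p^{e_1}$-Frobenius $\ell$-jets and $(p^{e_1}-1)/(m_1/(\ell+1)) > \varepsilon^\ell_F(L;x) - \delta$, and then set $e_0 \coloneqq e_1$. For every multiple $e = re_1$, Lemma \ref{lem:tensorpower} yields separation of $p^{re_1}$-Frobenius $\ell$-jets by $L^{m_1 d_r}$, and a direct cancellation gives
\[
  \frac{p^{re_1}-1}{m_1 d_r/(\ell+1)} = \frac{p^{e_1}-1}{m_1/(\ell+1)} > \varepsilon^\ell_F(L;x) - \delta,
\]
which is (ii). Part (iii) then drops out: the bound $\limsup \le \sup = \varepsilon^\ell_F(L;x)$ is automatic, while the reverse direction follows by letting $r \to \infty$ in (ii), since $m_1 d_r \to \infty$ and $s^\ell_F(L^{m_1 d_r};x) \ge re_1$ imply $(p^{s^\ell_F(L^{m_1 d_r};x)}-1)/(m_1 d_r/(\ell+1)) > \varepsilon^\ell_F(L;x) - \delta$ along this subsequence. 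The only step requiring genuine care is (i), where the first inclusion of Lemma \ref{lem:monomials} is essential to convert the Serre-vanishing output for ordinary powers of $\fm_x$ into the Frobenius-power form appearing in the definition of $\varepsilon^\ell_F$; everything after that is mechanical tracking of $\ell+1$ through \cite[Prop.\ 2.6]{MS14}.
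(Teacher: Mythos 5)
Your proposal is correct and follows essentially the same route as the paper. The only cosmetic difference is in part $(\ref{prop:ms26i})$, where you invoke Serre vanishing to get separation of $N$-jets for $m \gg 0$, whereas the paper deduces it from the superadditivity property \eqref{eq:superadditive} starting from a very ample power of $L$; both arguments then feed into the first inclusion of Lemma \ref{lem:monomials} in exactly the same way, and parts $(\ref{prop:ms26ia})$--$(\ref{prop:ms26iii})$ proceed identically via Lemma \ref{lem:tensorpower}.
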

\begin{proof}
  For $(\ref{prop:ms26i})$, let $m \ge 1$ be such that $L^m$ is very ample, and
  let $n$ be the number of generators of $\fm_x \cdot \cO_{X,x}$. Then, $L^m$
  separates tangent directions (i.e., $1$-jets) and so $L^{m(\ell p^e +
  n(p^e-1))}$ separates $(\ell p^e + n(p^e-1))$-jets by the
  superadditivity property \eqref{eq:superadditive}. Thus, $L^{m(\ell p^e +
  n(p^e-1))}$ separates $p^e$-Frobenius $\ell$-jets at $x$ by the inclusion
  $\fm_x^{\ell p^e + n(p^e-1)+1} \subseteq (\fm_x^{\ell+1})^{[p^e]}$ in
  Lemma \ref{lem:monomials}.
  \par Assertion $(\ref{prop:ms26ia})$ follows by $(\ref{prop:ms26i})$ and the
  definition of the $\ell$th Frobenius--Seshadri constant, since
  $s_F^\ell(L^m;x)$ is defined as the maximum $e \ge 0$ such that $L^m$ separates
  $p^e$-Frobenius $\ell$-jets at $x$.
  \par For $(\ref{prop:ms26ii})$, there exist positive integers $m_0$ and $e_0$
  such that the inequality \eqref{eq:limsuppropineq} holds by
  $(\ref{prop:ms26i})$ and the definition of $\varepsilon_F^\ell(L;x)$.
  For each multiple $e = re_0$ of $e_0$, let $m
  = m_0 \frac{p^{e}-1}{p^{e_0} - 1}$. Then, by Lemma \ref{lem:tensorpower}, we
  have that $L^m$ separates $p^e$-Frobenius $\ell$-jets at $x$. The inequality
  \eqref{eq:limsuppropineq} then follows, since
  \[
    \frac{p^e-1}{m/(\ell+1)} = \frac{p^{e_0}-1}{m_0/(\ell+1)} >
    \varepsilon_F^\ell(L;x) - \delta.
  \]
  \par For $(\ref{prop:ms26iii})$, let $m_0$ and $e_0$ be as in
  $(\ref{prop:ms26ii})$ for $\delta = 1$. We
  inductively choose an increasing sequence of positive integers $(m_r)_{r \ge
  0}$ as follows: having chosen $m_r$, we choose $m_{r+1}$
  such that \eqref{eq:limsuppropineq} holds with $\delta = 1/(r+1)$, and such
  that $m_r < m_{r+1}$. Note that this increasing property
  can be ensured by the fact that the $m$'s in $(\ref{prop:ms26ii})$ increase as
  $e$ increases. Then, we have
  \[
    \varepsilon^\ell_F(L;x) = 
    \lim_{r \to \infty} \frac{p^{s_F^\ell(L^{m_r};x)}-1}{m_r/(\ell+1)},
  \]
  which implies $(\ref{prop:ms26iii})$.
\end{proof}
We now give a calculation of both the ordinary Seshadri constant and the
$\ell$th Frobenius--Seshadri constants on projective space, which
shows that the limit supremum in 
Proposition \ref{prop:ms26}$(\ref{prop:ms26iii})$ cannot be computed as a limit.
\begin{example}\label{ex:projectivespace}
  Let $X = \PP^n_k$, and let $L = \cO_X(1)$ be the line bundle associated to a
  hyperplane. For every closed point $x \in \PP^n_k$, we claim that the
  restriction map
  \[
    \rho^{\ell,e}_{\cO_X(m)}\colon H^0(X,\cO_X(m)) \longrightarrow
    H^0\bigl(X,\cO_X(m) \otimes \cO_X/(\fm_x^{\ell+1})^{[p^e]}\bigr)
  \]
  is surjective if and only if $m \ge \ell p^e + n(p^e - 1)$. This claim follows
  from Lemma \ref{lem:monomials} after choosing local affine coordinates in
  $\cO_{X,x}$, and observing that $H^0(X,\cO_X(m))$ maps onto all monomials of
  degree $\le m$ in $\cO_{X,x}$. Note that the inequality $m \ge \ell p^e +
  n(p^e - 1)$ is equivalent to
  \[
    \frac{m+n}{\ell+n} \ge p^e.
  \]
  Letting $e = 0$ gives $\varepsilon(\cO_X(m);x) = 1$.
  For Frobenius--Seshadri constants, the equality in Proposition
  \ref{prop:ms26}$(\ref{prop:ms26iii})$ implies
  \[
    \varepsilon^\ell_F(\cO_X(1);x) \le \limsup_{m \to \infty}
    \frac{\frac{m+n}{\ell+n} - 1}{m/(\ell+1)} = \frac{\ell+1}{\ell+n} \cdot
    \limsup_{m\to\infty} \frac{m-\ell}{m} = \frac{\ell+1}{\ell+n},
  \]
  and the reverse inequality holds by computing a lower bound for the limit
  supremum using the sequence $m_e = \ell p^e +
  n(p^e - 1)$. On the other hand, we see that the limit supremum is not
  a limit, since the sequence $m_e' = \ell p^e +
  n(p^e - 1) - 1$ gives the limit
  \[
    \limsup_{e \to \infty} \frac{p^{s_F^\ell(L^{m_e'};x)} - 1}{m_e'/(\ell+1)} =
    \frac{\ell+1}{\ell+n} \cdot \lim_{e \to \infty} \frac{p^{e-1} - 1}{p^e -
    \frac{n+1}{n+\ell}} = \frac{\ell+1}{(\ell+n)p}.
  \]
  Note that in this example, we have the equality $\frac{\ell+1}{\ell+n} \cdot
  \varepsilon(L;x) = \varepsilon_F^\ell(L;x)$. We will see in
  Proposition \ref{prop:ms212} that in fact, the inequality $\le$ always holds.
\end{example}
We can use Lemma \ref{lem:tensorpower} and Proposition
\ref{prop:ms26}$(\ref{prop:ms26iii})$ to prove that Frobenius--Seshadri
constants are also well-behaved with respect to taking tensor powers. While we
will not need this result in the sequel, the proof will use the following lemma,
which will also be useful in the proof of Theorem \ref{thm:sepljets}.
\begin{lemma}[{cf.\ \cite[Rem.\ 2.3]{MS14}}]\label{lem:tensorwithgg}
  Let $X$ be a projectivty variety, and let $x \in X$ be a closed point. Let $L$
  be a line bundle on $X$, and let $M$ be another line bundle that is globally
  generated at $x$. If for some integers $e \ge 1$ and $\ell \ge 0$, we have
  that $L$ separates $p^e$-Frobenius $\ell$-jets at $x$, then $L \otimes M$ also
  separates $p^e$-Frobenius $\ell$-jets at $x$.
\end{lemma}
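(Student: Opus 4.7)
The plan is to use the hypothesis that $M$ is globally generated at $x$ to produce a section whose germ is a free generator of $M_x$, and then chase a simple commutative square that compares the restriction maps for $L$ and for $L \otimes M$.

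Since $M$ is globally generated at $x$, there exists $s \in H^0(X,M)$ with $s(x) \ne 0$, i.e., $s_x \notin \fm_x M_x$. Because $M_x$ is a free $\cO_{X,x}$-module of rank one, Nakayama's lemma implies that multiplication by $s_x$ defines an isomorphism $\cO_{X,x} \xrightarrow{\sim} M_x$. Tensoring with $L_x$ and reducing modulo the Frobenius power $(\fm_x^{\ell+1})^{[p^e]}$ then yields an isomorphism
\[
  L_x/(\fm_x^{\ell+1})^{[p^e]} L_x \xrightarrow{\sim} (L \otimes M)_x/(\fm_x^{\ell+1})^{[p^e]} (L \otimes M)_x.
\]
The sheaves on either side of the restriction maps $\rho^{\ell,e}_L$ and $\rho^{\ell,e}_{L \otimes M}$ are supported at $x$, so this stalkwise isomorphism is identified with an isomorphism on global sections, given by multiplication by the image $\bar s$ of $s$.

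Next, I would assemble the commutative square
\[
  \begin{tikzcd}[column sep=large]
    H^0(X,L) \arrow[r, "\cdot s"] \arrow[d, two heads, "\rho^{\ell,e}_L"'] & H^0(X, L \otimes M) \arrow[d, "\rho^{\ell,e}_{L \otimes M}"]\\
    H^0\bigl(X, L \otimes \cO_X/(\fm_x^{\ell+1})^{[p^e]}\bigr) \arrow[r, "\cdot \bar s", "\sim"'] & H^0\bigl(X, L \otimes M \otimes \cO_X/(\fm_x^{\ell+1})^{[p^e]}\bigr)
  \end{tikzcd}
\]
whose horizontal arrows are multiplication by $s$ and $\bar s$ respectively. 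The left vertical arrow is surjective by hypothesis, and the bottom arrow is the isomorphism produced above, so the composition going down then right is surjective. Commutativity then forces the right vertical arrow $\rho^{\ell,e}_{L \otimes M}$ to be surjective, which is precisely the statement that $L \otimes M$ separates $p^e$-Frobenius $\ell$-jets at $x$. The argument is purely formal, and no real obstacle arises: the entire proof hinges on the fact that a section of $M$ generating the stalk at $x$ induces an isomorphism of finite-length $\cO_{X,x}$-modules compatible with the restriction from global sections.
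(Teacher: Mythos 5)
Your proposal is correct and takes essentially the same approach as the paper: pick a global section of $M$ not vanishing at $x$ and chase the same commutative square, with the bottom horizontal arrow an isomorphism because the sheaves are supported at $x$ and the section generates the stalk there.
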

\begin{proof}
  Since $M$ is globally generated at $x$, there is a global section $t \in
  H^0(X,M)$ that does not vanish at $x$. The commutative diagram
  \[
    \begin{tikzcd}
      H^0(X,L) \dar[swap]{t \otimes -}
      \rar[twoheadrightarrow]{\rho_{L}^{\ell,e}}
      & H^0\bigl(X,L \otimes \cO_X/(\fm_x^{\ell+1})^{[p^{e}]}\bigr)
      \dar{t_x \otimes -}[sloped,below]{\sim}\\
      H^0(X,L \otimes M) \rar{\rho_{L \otimes M}^{\ell,e}}
      & H^0\bigl(X,L \otimes M \otimes
      \cO_X/(\fm_x^{\ell+1})^{[p^{e}]}\bigr)
    \end{tikzcd}
  \]
  shows that the restriction map $\rho_{L \otimes M}^{\ell,e}$ is surjective,
  i.e., we have that $L \otimes M$ separates $p^e$-Frobenius $\ell$-jets.
\end{proof}
\begin{proposition}[cf.\ {\cite[Prop.\ 2.8]{MS14}}]
  Let $L$ be an ample line bundle on a projective variety $X$, and let $x \in
  X$ be a closed point. Then, for all integers $\ell \ge 0$ and $s > 0$, we have
  $\varepsilon_F^\ell(L^r;x) = r \cdot \varepsilon_F^\ell(L;x)$.
\end{proposition}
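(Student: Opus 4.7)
My plan is to prove both inequalities separately. The inequality $\varepsilon_F^\ell(L^r;x) \le r \cdot \varepsilon_F^\ell(L;x)$ is essentially formal: since $(L^r)^m = L^{rm}$, we can write
\[
  \varepsilon_F^\ell(L^r;x) = \sup_{m \ge 1} \frac{p^{s_F^\ell(L^{rm};x)}-1}{m/(\ell+1)}
  = r \cdot \sup_{m \ge 1} \frac{p^{s_F^\ell(L^{rm};x)}-1}{rm/(\ell+1)},
\]
and the last supremum, taken only over multiples of $r$, is bounded above by the supremum defining $\varepsilon_F^\ell(L;x)$.

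The reverse inequality $\varepsilon_F^\ell(L^r;x) \ge r \cdot \varepsilon_F^\ell(L;x)$ is the substantive part. The strategy is to take a pair $(m,e)$ for which $L^m$ separates $p^e$-Frobenius $\ell$-jets with $\frac{p^e-1}{m/(\ell+1)}$ close to $\varepsilon_F^\ell(L;x)$, and convert this into a statement about powers of $L^r$. The obstacle is a divisibility issue: to read such a pair as data about $L^r$, one needs $r$ to divide the exponent of $L$. I plan to resolve this by first shifting the exponent slightly using Lemma \ref{lem:tensorwithgg}: if $n_0$ is large enough that $L^n$ is globally generated at $x$ for all $n \ge n_0$ (which exists by ampleness of $L$), and if $n$ is the smallest integer with $n_0 \le n$ and $r \mid m+n$, then $n < n_0 + r$ is bounded, and Lemma \ref{lem:tensorwithgg} gives that $L^{m+n}$ still separates $p^e$-Frobenius $\ell$-jets. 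Since $r \mid m+n$, this yields the inequality
\[
  \varepsilon_F^\ell(L^r;x) \ge \frac{p^e-1}{((m+n)/r)/(\ell+1)}
  = \frac{m}{m+n} \cdot r \cdot \frac{p^e-1}{m/(\ell+1)}.
\]

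The remaining task is to make the correction factor $m/(m+n)$ tend to $1$. For this, I would use Lemma \ref{lem:tensorpower} to blow up $(m,e)$ to a pair $(m d_s, se)$ with arbitrarily large first coordinate, while preserving the ratio $\frac{p^e-1}{m/(\ell+1)}$ exactly. Combined with Proposition \ref{prop:ms26}$(\ref{prop:ms26ii})$ (which produces, for any $\delta>0$, a pair $(m,e)$ realizing the bound $\varepsilon_F^\ell(L;x)-\delta$), this lets me drive $m \to \infty$ with $n$ bounded and with the ratio still approximating $\varepsilon_F^\ell(L;x)$ within $\delta$. Letting $\delta \to 0$ then yields $\varepsilon_F^\ell(L^r;x) \ge r \cdot \varepsilon_F^\ell(L;x)$, completing the proof.
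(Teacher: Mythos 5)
Your proposal is correct and takes essentially the same route as the paper: the easy inequality is handled identically, and for the hard inequality both proofs combine Lemma \ref{lem:tensorpower} (to scale a good pair $(m,e)$ up to $(md_s,se)$ while preserving the ratio $(p^e-1)/(m/(\ell+1))$) with Lemma \ref{lem:tensorwithgg} (to bump the exponent by a bounded amount so that it becomes divisible by $r$), then let the bounded correction become negligible and finally send $\delta \to 0$. The only cosmetic difference is the bookkeeping for the divisibility adjustment: the paper fixes a single $j$ with $L^j$ globally generated and rounds $md_s+j$ up to $ra_s = r\lceil (md_s+j)/r\rceil$, whereas you round $m$ up to the least $m+n$ with $n\ge n_0$ and $r\mid m+n$; both pay a bounded cost that vanishes in the limit.
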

\begin{proof}
  First, we have
  \[
    \varepsilon_F^\ell(L^r;x) = r \cdot \sup_{m \ge 1}
    \frac{p^{s^\ell_F(L^{rm};x)-1}}{rm/(\ell+1)} \le
    r \cdot \sup_{m' \ge 1} \frac{p^{s^\ell_F(L^{m'};x)-1}}{m'/(\ell+1)}
    = r \cdot \varepsilon_F^\ell(L;x)
  \]
  by running through all tensor powers of $L$ instead of just the powers that
  are divisible by $r$. It therefore remains to show the opposite inequality.
  We will fix an integer $j > 0$ such that $L^{j}$ is globally generated.
  \par Let $\delta > 0$ be given, and let $m$ be a positive integer such that
  $L^m$ separates $p^e$-Frobenius $\ell$-jets for some integers $e,\ell \ge 0$
  such that
  \[
    \frac{p^e-1}{m/(\ell+1)} > \varepsilon_F^\ell(L;x) - \frac{\delta}{r}.
  \]
  Now let $i$ be a positive integer. Denoting $d_i = \frac{p^{ie}-1}{p^e-1}$,
  we have
  \begin{equation}\label{eq:homogensfineq}
    s_F(L^{md_i+j};x) \ge s_F(L^{md_i};x) \ge ie
  \end{equation}
  by Lemmas \ref{lem:tensorpower} and \ref{lem:tensorwithgg}. Now denoting
  \[
    a_i = \left\lceil \frac{md_i+j}{r} \right\rceil,
  \]
  we have that $ra_i \ge md_i+j$, hence
  \[
    \frac{p^{s_F^\ell(L^{ra_i};x)}-1}{a_i/(\ell+1)} \ge
    \frac{p^{s_F^\ell(L^{md_i+j};x)}-1}{a_i/(\ell+1)}
    \ge \frac{p^{ie}-1}{a_i/(\ell+1)} = \frac{p^e-1}{m/(\ell+1)} \cdot
    \frac{d_im}{\bigl\lceil (md_i + j)/r \bigr\rceil},
  \]
  where the second inequality is by \eqref{eq:homogensfineq}.
  Taking limit suprema as $i \to \infty$ and using Proposition
  \ref{prop:ms26}$(\ref{prop:ms26iii})$ gives
  \[
    \varepsilon_F^\ell(L^r;x) \ge \limsup_{i \to \infty}
    \frac{p^{s_F^\ell(L^{ra_i};x)}-1}{a_i/(\ell+1)} \ge
    r\cdot\varepsilon_F^\ell(L;x) - \delta.
  \]
  Since $\delta > 0$ was arbitrary, we have the inequality
  $\varepsilon_F^\ell(L^r;x) \ge r \cdot \varepsilon_F^\ell(L;x)$.
\end{proof}
\subsection{A comparison with the ordinary Seshadri constant}
In order to use the $\ell$th Frobenius--Seshadri constant to prove
Theorem \ref{thm:sepljetsdem}, we require a comparison with the ordinary
Seshadri constant. This will allow us to
deduce positivity properties of adjoint bundles in \S\ref{s:adjoint}.
\begin{proposition}[cf.\ {\cite[Prop.\ 2.12]{MS14}}]\label{prop:ms212}
  If $L$ is an ample line bundle on a projective variety $X$ of dimension $n$,
  then for every smooth point $x \in X$ and integer $\ell \ge 0$, we have the
  sequence of inequalities
  \begin{equation}\label{eq:keyineq}
    \frac{\ell+1}{\ell+n} \cdot \varepsilon(L;x) \le \varepsilon_F^\ell(L;x) \le
    \varepsilon(L;x).
  \end{equation}
  In particular, $\varepsilon_F^\ell(L;x) \to \varepsilon(L;x)$ as $\ell \to
  \infty$.
\end{proposition}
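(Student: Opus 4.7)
The plan is to extract both inequalities directly from Lemma \ref{lem:monomials}, which sandwiches the Frobenius power $(\fm_x^{\ell+1})^{[p^e]}$ between ordinary powers of $\fm_x$ on both sides. Smoothness of $x$ is used precisely because Lemma \ref{lem:monomials} requires $\fm_x \cdot \cO_{X,x}$ to be generated by exactly $n = \dim X$ elements, namely a regular system of parameters.

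For the upper bound $\varepsilon_F^\ell(L;x) \le \varepsilon(L;x)$, I would use the second inclusion $(\fm_x^{\ell+1})^{[p^e]} \subseteq \fm_x^{(\ell+1)p^e}$. If $L^m$ separates $p^e$-Frobenius $\ell$-jets at $x$, then the restriction map factors through the coarser quotient $\cO_X / \fm_x^{(\ell+1)p^e}$, so $L^m$ separates $((\ell+1)p^e - 1)$-jets in the ordinary sense. Setting $e = s_F^\ell(L^m;x)$ (guaranteed non-negative for some $m$ by Proposition \ref{prop:ms26}$(\ref{prop:ms26i})$) and rearranging yields $\frac{s(L^m;x)}{m} \ge \frac{p^{s_F^\ell(L^m;x)} - 1}{m/(\ell+1)}$; the desired bound then follows by taking suprema over $m$.

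For the lower bound $\frac{\ell+1}{\ell+n}\,\varepsilon(L;x) \le \varepsilon_F^\ell(L;x)$, the idea is to use the first inclusion $\fm_x^{\ell p^e + n(p^e-1)+1} \subseteq (\fm_x^{\ell+1})^{[p^e]}$ in the reverse direction: whenever $s(L^m;x) \ge \ell p^e + n(p^e-1)$, the surjection onto $\cO_X/\fm_x^{\ell p^e + n(p^e-1)+1}$ composes with the natural quotient onto $\cO_X/(\fm_x^{\ell+1})^{[p^e]}$, so $L^m$ also separates $p^e$-Frobenius $\ell$-jets. Fix $m$ with $s(L^m;x) > 0$; for each large $e$ I would set $r_e := \lceil (\ell p^e + n(p^e-1))/s(L^m;x) \rceil$ and pass to the tensor power $L^{r_e m}$. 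By the superadditivity \eqref{eq:superadditive}, $s(L^{r_e m};x) \ge r_e \cdot s(L^m;x) \ge \ell p^e + n(p^e-1)$, hence $s_F^\ell(L^{r_e m};x) \ge e$ and therefore $\varepsilon_F^\ell(L;x) \ge (\ell+1)(p^e - 1)/(r_e m)$. As $e \to \infty$ with $m$ fixed, the definition of $r_e$ forces $r_e / p^e \to (\ell+n)/s(L^m;x)$, and the right-hand side converges to $\frac{\ell+1}{\ell+n} \cdot \frac{s(L^m;x)}{m}$. Taking a supremum over $m$ via \eqref{eq:defordseshadri} finishes the argument.

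The main obstacle is precisely this discretization in the lower bound: only exponents of the form $p^e$ are available on the Frobenius side, while $s(L^m;x)$ scales linearly in $m$, so a naive pairing of $e$ against a given $m$ would lose a factor of $p$ and produce only $\frac{\ell+1}{(\ell+n)p}$ in place of the sharp constant. The scaling trick by $r_e$, together with superadditivity of $s(L^m;x)$, lets the effective exponent $r_e m$ track $p^e$ continuously in the limit, which is exactly what is needed to recover $(\ell+1)/(\ell+n)$; Example \ref{ex:projectivespace} confirms that this constant is in general optimal.
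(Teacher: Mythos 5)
Your proof is correct and follows essentially the same route as the paper: the upper bound from the inclusion $(\fm_x^{\ell+1})^{[p^e]} \subseteq \fm_x^{(\ell+1)p^e}$ and the lower bound from the inclusion $\fm_x^{\ell p^e+n(p^e-1)+1} \subseteq (\fm_x^{\ell+1})^{[p^e]}$, paired with superadditivity and a ceiling-scaled tensor power (your $r_e$ is the paper's $d_e$). Your concluding supremum over $m$ replaces the paper's explicit $\delta$-argument, but these are logically equivalent.
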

\begin{proof}
  Since $\fm_x\cdot\cO_{X,x}$ is generated by $n$ elements, we have the sequence
  of inclusions
  \begin{equation}\label{eq:inclusions}
    \fm_x^{\ell p^e+n(p^e-1)+1} \subseteq (\fm_x^{\ell+1})^{[p^e]} \subseteq
    \fm_x^{(\ell+1)p^e}
  \end{equation}
  by Lemma \ref{lem:monomials}. The right inclusion in \eqref{eq:inclusions}
  implies
  \[
    s(L^m;x) \ge (\ell+1)p^{s_F^\ell(L^m;x)} - 1 \ge
    (\ell+1)(p^{s_F^\ell(L^m;x)} - 1),
  \]
  and so the right inequality in \eqref{eq:keyineq} follows after dividing by
  $m$ throughout, and taking limit suprema as $m \to \infty$.
  \par For the left inequality in \eqref{eq:keyineq}, let $\delta > 0$ be
  given, and let $m_0$ be a positive integer such that
  \[
    \frac{s(L^{m_0};x)}{m_0} > \varepsilon(L;x) - \frac{\ell+n}{\ell+1} \cdot
    \delta.
  \]
  Given any non-negative integer $e$, denote
  \[
    d_e = \left\lceil \frac{\ell p^e + n(p^e-1)}{s(L^{m_0};x)} \right\rceil =
    \left\lceil \frac{(\ell+n) p^e - n}{s(L^{m_0};x)} \right\rceil.
  \]
  By the superadditivity property \eqref{eq:superadditive}, we have
  \[
    s(L^{m_0d_e};x) \ge d_e \cdot s(L^{m_0};x) \ge \ell p^e + n(p^e-1).
  \]
  By the left inclusion in \eqref{eq:inclusions}, this inequality implies
  \[
    s_F^\ell(L^{m_0d_e};x) \ge e,
  \]
  and therefore
  \[
    \varepsilon_F^\ell(L;x) \ge
    \frac{p^{s_F^\ell(L^{m_0d_e};x)}-1}{m_0d_e/(\ell+1)} \ge
    \frac{(\ell+1)(p^e-1)}{m_0\bigl\lceil\bigl((\ell+n) p^e -
    n\bigr)/s(L^{m_0};x) \bigr\rceil}.
  \]
  As $e \to \infty$, the right-hand side converges to
  \[
    \frac{\ell+1}{\ell+n} \cdot \frac{s(L^{m_0};x)}{m_0} >
    \frac{\ell+1}{\ell+n} \cdot \varepsilon(L;x) - \delta,
  \]
  hence we have the inequality
  \[
    \varepsilon_F^\ell(L;x) > \frac{\ell+1}{\ell+n} \cdot \varepsilon(L;x) -
    \delta
  \]
  for all $\delta > 0$. Since $\delta > 0$ was arbitrary, we obtain the left
  inequality in \eqref{eq:keyineq}.
\end{proof}
In light of Example \ref{ex:projectivespace} and Theorems
\ref{thm:sepljetsdem} and \ref{thm:sepljets}, it
seems more accurate
to think of the $\ell$th Frobenius--Seshadri constant as being closer to
$\frac{\ell+1}{\ell+n} \cdot \varepsilon(L;x)$ than to $\varepsilon(L;x)$,
just as for the zeroth Frobenius--Seshadri constant \cite[p.\ 869]{MS14}. We
also observe that Example \ref{ex:projectivespace} shows that the lower bound in
\eqref{eq:keyineq} is optimal.
\par We can also compare different Frobenius--Seshadri constants:
\begin{corollary}
  If $L$ is a line bundle on an $n$-dimensional projective variety $X$, then for
  every smooth point $x \in X$ and integers $\ell > m \ge 0$, we have
  \begin{equation*}
    \frac{\ell+1}{\ell+n} \cdot \varepsilon_F^m(L;x) \le
    \varepsilon_F^\ell(L;x) \le \frac{\ell+1}{m+1} \cdot \varepsilon_F^m(L;x)
  \end{equation*}
\end{corollary}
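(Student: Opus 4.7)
The plan is to prove the two inequalities by independent arguments: the upper bound follows immediately from an inclusion of Frobenius powers, while the lower bound comes from chaining the two halves of Proposition~\ref{prop:ms212}.

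For the upper bound, since $\ell > m$ we have the elementary ideal inclusion $(\fm_x^{\ell+1})^{[p^e]} \subseteq (\fm_x^{m+1})^{[p^e]}$ for every $e \ge 0$. This makes $\cO_X/(\fm_x^{m+1})^{[p^e]}$ a quotient of $\cO_X/(\fm_x^{\ell+1})^{[p^e]}$, so surjectivity of $\rho_{L^M}^{\ell,e}$ implies surjectivity of $\rho_{L^M}^{m,e}$, and hence $s_F^m(L^M;x) \ge s_F^\ell(L^M;x)$ for every $M \ge 1$. Dividing $(p^{s_F^\bullet(L^M;x)} - 1)$ by $M$ and taking the supremum over $M \ge 1$ therefore yields
\[
\frac{\varepsilon_F^m(L;x)}{m+1} \ge \frac{\varepsilon_F^\ell(L;x)}{\ell+1},
\]
which rearranges to the desired upper bound.

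For the lower bound, I would invoke Proposition~\ref{prop:ms212} twice: its left inequality applied at index $\ell$ gives $\varepsilon_F^\ell(L;x) \ge \tfrac{\ell+1}{\ell+n}\, \varepsilon(L;x)$, while its right inequality applied at index $m$ gives $\varepsilon(L;x) \ge \varepsilon_F^m(L;x)$. Composing the two inequalities yields
\[
\varepsilon_F^\ell(L;x) \ge \frac{\ell+1}{\ell+n}\, \varepsilon(L;x) \ge \frac{\ell+1}{\ell+n}\, \varepsilon_F^m(L;x),
\]
which is exactly the desired lower bound.

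There is no substantive obstacle here; once Proposition~\ref{prop:ms212} and the Frobenius-power inclusion are in hand, the corollary is essentially a bookkeeping exercise. The only minor point worth flagging is that Proposition~\ref{prop:ms212} carries an ampleness hypothesis on $L$, so the lower bound implicitly inherits it; the upper bound, by contrast, is valid whenever the two Frobenius--Seshadri constants involved are defined, since it uses only the ideal-theoretic comparison.
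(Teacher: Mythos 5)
Your argument is correct and follows essentially the same route as the paper: the upper bound comes from the ideal inclusion $(\fm_x^{\ell+1})^{[p^e]} \subseteq (\fm_x^{m+1})^{[p^e]}$ (so separating $p^e$-Frobenius $\ell$-jets implies separating $p^e$-Frobenius $m$-jets), and the lower bound comes from chaining the two halves of Proposition~\ref{prop:ms212}. Your observation that the lower bound implicitly requires $L$ ample (from Proposition~\ref{prop:ms212}) is a fair remark on the statement's hypotheses, but it does not affect the validity of the argument.
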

\begin{proof}
  If $L^r$ separates $p^e$-Frobenius $\ell$-jets at $x$, then it separates
  $p^e$-Frobenius $m$-jets at $x$, giving the right inequality. The
  left inequality follows by using Proposition \ref{prop:ms212} for different
  values of $\ell$.
\end{proof}
\subsection{Numerical invariance}
We now prove that Frobenius--Seshadri constants only depend on the
numerical equivalence class of a line bundle. This fact will not be used in the
sequel. Regularity in the proof below is in the sense of Castelnuovo and
Mumford; see \cite[Def.\ 1.8.4]{Laz04} for the definition.
\begin{proposition}[cf.\ {\cite[Prop.\ 2.14]{MS14}}]
  Let $X$ be a projective variety, and let $x \in X$ be a closed point.
  If $L_1$ and $L_2$ are numerically equivalent ample line bundles on
  $X$, then $\varepsilon_F^\ell(L_1;x) = \varepsilon_F^\ell(L_2;x)$ for all
  integers $\ell \ge 0$.
\end{proposition}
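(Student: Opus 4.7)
The plan is to prove $\varepsilon_F^\ell(L_1;x) \le \varepsilon_F^\ell(L_2;x)$; the reverse inequality follows by interchanging the roles of $L_1$ and $L_2$. By Proposition \ref{prop:ms26}$(\ref{prop:ms26ia})$, this reduces to showing that whenever $L_1^m$ separates $p^e$-Frobenius $\ell$-jets at $x$, one has $\varepsilon_F^\ell(L_2;x) \ge (p^e-1)/(m/(\ell+1))$.

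The key input is Fujita's vanishing theorem applied to the family $\{N^j\}_{j}$, where $N \coloneqq L_1 \otimes L_2^{-1}$. Since $L_1$ and $L_2$ are numerically equivalent, $N^j$ is numerically trivial, hence nef, for every integer $j$. Fixing a very ample line bundle $A$ on $X$, a standard Castelnuovo--Mumford regularity argument (with respect to $A$) combined with Fujita's vanishing produces an integer $k_0 \ge 1$, independent of $j$, such that $A^{k_0} \otimes N^j$ is globally generated on $X$ for every integer $j$. Moreover, since $L_2$ is ample, we may choose an integer $c \ge 1$ such that $L_2^c \otimes A^{-k_0}$ admits a global section not vanishing at $x$, so $L_2^c \otimes A^{-k_0}$ is globally generated at $x$.

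Now suppose $L_1^m$ separates $p^e$-Frobenius $\ell$-jets at $x$. By Lemma \ref{lem:tensorpower}, the line bundle $L_1^{md_r}$ separates $p^{re}$-Frobenius $\ell$-jets at $x$ for every $r \ge 1$, where $d_r = (p^{re}-1)/(p^e-1)$. Applying Lemma \ref{lem:tensorwithgg} first with the globally generated line bundle $A^{k_0} \otimes N^{md_r}$, and then with $L_2^c \otimes A^{-k_0}$, we deduce that
\[
  L_1^{md_r} \otimes A^{k_0} \otimes N^{md_r} \otimes L_2^c \otimes A^{-k_0}
  = L_2^{md_r+c}
\]
separates $p^{re}$-Frobenius $\ell$-jets at $x$. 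Hence
\[
  \varepsilon_F^\ell(L_2;x) \ge \frac{p^{re}-1}{(md_r+c)/(\ell+1)},
\]
and letting $r \to \infty$, using $p^{re}/d_r \to p^e-1$, yields $\varepsilon_F^\ell(L_2;x) \ge (p^e-1)/(m/(\ell+1))$, as required.

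The main obstacle is producing the uniform integer $k_0$ above, valid for every $N^j$; once that regularity statement is in place, the argument becomes a formal combination of Lemmas \ref{lem:tensorpower} and \ref{lem:tensorwithgg} with Proposition \ref{prop:ms26}$(\ref{prop:ms26ia})$. Note that the auxiliary constant $c$ is absorbed in the limit as $r \to \infty$, which is essential since $c$ depends on $A$, $k_0$, and $L_2$ but not on $m$, $e$, or $r$.
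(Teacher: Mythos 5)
Your proof is correct in substance and takes essentially the same route as the paper: both rely on Fujita's vanishing theorem and Castelnuovo--Mumford regularity to produce a uniformly globally generated twist of a numerically trivial line bundle, both invoke Lemma \ref{lem:tensorwithgg} to tensor by it, and both absorb a bounded additive correction in the limit. The paper organizes the limit via Proposition \ref{prop:ms26}$(\ref{prop:ms26iii})$ with a single globally generated factor $L_1^j \otimes P^{m_r+j}$ (where $P = L_2 \otimes L_1^{-1}$), whereas you invoke Lemma \ref{lem:tensorpower} directly and split the globally generated factor into two pieces $A^{k_0} \otimes N^{\pm md_r}$ and $L_2^c \otimes A^{-k_0}$; this is cosmetically different but the same mechanism. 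One small slip: with $N = L_1 \otimes L_2^{-1}$, you need to tensor by $A^{k_0} \otimes N^{-md_r}$, not $A^{k_0} \otimes N^{md_r}$ --- as written, $L_1^{md_r} \otimes A^{k_0} \otimes N^{md_r} \otimes L_2^c \otimes A^{-k_0} = L_1^{2md_r} \otimes L_2^{c - md_r}$, which is not $L_2^{md_r+c}$. Since your global generation claim covers all integers $j$ (including negative ones), the fix is immediate.
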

\begin{proof}
  We first claim that if $A$ is a globally generated ample line bundle, then
  there exists $m_0$ such that $A^m \otimes N$ is globally generated for all
  integers $m \ge m_0$ and nef line bundles $N$.
  First, by Fujita's vanishing theorem \cite[Thm.\ 5.1]{Fuj83}, there exists an
  integer $m_1$ such that for all integers $m \ge m_1$ and nef line bundles
  $N$, we have $H^i(X,A^m \otimes N) = 0$ for all $i > 0$. Thus, if
  $m \ge m_1 + \dim X$, then the line bundle $A^m \otimes N$ is
  $0$-regular with respect to $A$, hence is globally generated by \cite[Thm.\ 1.8.5$(i)$]{Laz04}. It therefore suffices to set $m_0 = m_1 + \dim X$.
  \par We now prove the proposition. By hypothesis, there exists a numerically
  trivial line bundle $P$ such that $L_2 \simeq L_1 \otimes P$. Applying the
  result of the previous paragraph where $A$ is a large enough power of $L_1$,
  we see that there exists a positive integer $j$ such that $L_1^j \otimes N$ is
  globally generated for all nef line bundles $N$, hence in particular, $L_1^j
  \otimes P^i$ is globally generated for all integers $i$. Now by
  Proposition \ref{prop:ms26}$(\ref{prop:ms26iii})$, there exists an increasing
  sequence $(m_r)_{r \ge 0}$ of positive integers such that
  \[
    \varepsilon_F^\ell(L_1;x) = \lim_{r \to \infty}
    \frac{p^{s^\ell_F(L_1^{m_r};x)}-1}{m_r/(\ell+1)}.
  \]
  For each integer $r \ge 0$, since $L_2^{m_r+j} \simeq L_1^{m_r} \otimes
  L_1^{j} \otimes P^{m_r+j}$ and $L_1^{j} \otimes P^{m_r+j}$ is globally
  generated, we see that
  \[
    s^\ell_F(L_1^{m_r};x) \le s^\ell_F(L_2^{m_r+j};x)
  \]
  by Lemma \ref{lem:tensorwithgg}. We therefore have that
  \[
    \frac{p^{s_F^\ell(L_1^{m_r};x)}-1}{m_r/(\ell+1)} \le
    \frac{p^{s_F^\ell(L_2^{m_r+j};x)}-1}{m_r/(\ell+1)} =
    \frac{p^{s_F^\ell(L_2^{m_r+j};x)}-1}{(m_r+j)/(\ell+1)} \cdot
    \frac{m_r+j}{m_r}.
  \]
  Since the limit of the left-hand side is $\varepsilon_F^\ell(L_1;x)$ by
  choice of the sequence $(m_r)_{r \ge 0}$, taking limit suprema as $r \to
  \infty$ throughout this inequality yields the inequality
  $\varepsilon_F^\ell(L_1;x) \le \varepsilon_F^\ell(L_2;x)$ by
  Proposition \ref{prop:ms26}$(\ref{prop:ms26iii})$. Finally, repeating
  the argument above after switching the roles of $L_1$ and $L_2$, we have the
  equality $\varepsilon_F^\ell(L_1;x) = \varepsilon_F^\ell(L_2;x)$.
\end{proof}
\section{Frobenius--Seshadri constants and adjoint bundles}
\label{s:adjoint}
We now turn to the proofs of Theorems \ref{thm:sepljetsdem} and
\ref{thm:sepljets}, which we restate below. Recall our standing
assumption that our ground field $k$ is algebraically closed and of
characteristic $p > 0$.
\begin{customthm}{A}\label{thm:sepljetsdem}
  Let $L$ be an ample line bundle on a smooth projective variety $X$ of
  dimension $n$. Let $x \in X$ be a closed point. If the inequality
  \[
    \varepsilon(L;x) > n + \ell
  \]
  holds, then $\omega_X \otimes L$ separates $\ell$-jets at $x$.
\end{customthm}
\begin{customthm}{C}\label{thm:sepljets}
  Let $L$ be an ample line bundle on a smooth projective variety $X$.
  Let $x \in X$ be a closed point. If the inequality
  \[
    \varepsilon_F^\ell(L;x) > \ell + 1
  \]
  holds, then $\omega_X \otimes L$ separates $\ell$-jets at $x$.
\end{customthm}
As we mentioned in \S\ref{s:intro}, Theorem \ref{thm:sepljetsdem} is an
immediate consequence of Theorem \ref{thm:sepljets}:
\begin{proof}[Proof of Theorem \texorpdfstring{\ref{thm:sepljetsdem}}{A}]
  The inequality $\varepsilon(L;x) > n+\ell$ implies the inequality
  $\varepsilon_F^\ell(L;x) > \ell+1$ by Proposition \ref{prop:ms212}, hence the
  assertion immediately follows from Theorem \ref{thm:sepljets}.
\end{proof}
Demailly proved the analogue of Theorem \ref{thm:sepljetsdem} in
characteristic zero by using the Kawamata--Viehweg vanishing theorem, but only
assuming that $L$ is big and nef \cite[Prop.\ 6.8$(a)$]{Dem92}. We do not know
if this assumption suffices in positive characteristic.
\begin{remark}\label{rem:comparisonwithms}
  It is possible to obtain a version of Theorem \ref{thm:sepljetsdem} using
  only the zeroth Frobenius--Seshadri constant by na\"ively inducing on the
  order of
  jets in the proof of \cite[Lem.\ 3.3]{MS14}. However, the hypothesis needed
  for separation of $\ell$-jets using this method is the stronger lower bound
  $\varepsilon(L;x) > n+n\ell$. The proof is also
  more technical and relies on Castelnuovo--Mumford regularity.
\end{remark}
There are two main ingredients in the proof of Theorem \ref{thm:sepljets}.
The first is the following reformulation of our results from \S\ref{s:defs}.
\begin{proposition}\label{prop:newelljets}
  Let $L$ be an ample line bundle on a projective variety $X$, and let $x \in X$
  be a closed point. If $\varepsilon_F^\ell(L;x) > \alpha$ for some real number
  $\alpha > 0$, then we can find positive integers $m$ and $e$ satisfying
  \begin{equation}\label{eq:alphaineq}
    \frac{p^e-1}{m} > \frac{\alpha}{\ell+1}
  \end{equation}
  such that $L^m$ separates $p^e$-Frobenius $\ell$-jets at $x$. Furthermore,
  we may take $m$ and $e$ so that the quantity
  \begin{equation}\label{eq:alphaarblarge}
    p^e - 1 -\frac{\alpha}{\ell+1}m
  \end{equation}
  is arbitrarily large.
\end{proposition}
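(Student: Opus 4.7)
The plan is to derive both assertions directly from results already established in Section~\ref{s:defs}; the proof should be almost purely formal, with no significant obstacle, since the real work has been done in Lemma~\ref{lem:tensorpower} and Proposition~\ref{prop:ms26}.

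For the first assertion, I would apply Proposition~\ref{prop:ms26}$(\ref{prop:ms26ii})$ with the choice $\delta := \varepsilon_F^\ell(L;x) - \alpha$, which is positive by hypothesis. This produces positive integers $m$ and $e$ such that $L^m$ separates $p^e$-Frobenius $\ell$-jets at $x$, and such that
\[
  \frac{p^e - 1}{m/(\ell+1)} > \varepsilon_F^\ell(L;x) - \delta = \alpha.
\]
Dividing through by $\ell+1$ rearranges this to the required inequality \eqref{eq:alphaineq}.

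For the second assertion, the idea is to amplify the pair $(m,e)$ just produced by iterating the construction in Lemma~\ref{lem:tensorpower}. Setting $d_r := (p^{re}-1)/(p^e-1)$ and replacing $(m,e)$ by $(md_r, re)$, Lemma~\ref{lem:tensorpower} tells us that $L^{md_r}$ separates $p^{re}$-Frobenius $\ell$-jets at $x$ for every $r \ge 1$. A direct calculation shows that the ratio is preserved,
\[
  \frac{p^{re}-1}{md_r} = \frac{p^e - 1}{m} > \frac{\alpha}{\ell+1},
\]
so \eqref{eq:alphaineq} continues to hold after this replacement. At the same time,
\[
  p^{re} - 1 - \frac{\alpha}{\ell+1}\, md_r = \bigl(p^{re} - 1\bigr)\left(1 - \frac{\alpha\, m}{(\ell+1)(p^e-1)}\right),
\]
and the parenthetical factor is a strictly positive constant, again by \eqref{eq:alphaineq}. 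Since $p^{re} - 1 \to \infty$ as $r \to \infty$, the quantity in \eqref{eq:alphaarblarge} can be made arbitrarily large by choosing $r$ sufficiently large. The only point requiring any care is to record this rescaling cleanly so that the new pair $(md_r, re)$ is recognized as the $(m,e)$ whose existence the proposition asserts.
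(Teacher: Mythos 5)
Your proof is correct and follows essentially the same route as the paper: obtain an initial pair $(m,e)$ witnessing the ratio inequality, then amplify via Lemma~\ref{lem:tensorpower} with $d_r = (p^{re}-1)/(p^e-1)$ and let $r \to \infty$. The only cosmetic difference is that you invoke Proposition~\ref{prop:ms26}$(\ref{prop:ms26ii})$ to produce the initial pair, whereas the paper gets it more directly from Proposition~\ref{prop:ms26}$(\ref{prop:ms26i})$ together with the definition of $\varepsilon_F^\ell(L;x)$; both work, and your choice is just a slightly heavier tool for the same step.
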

\begin{proof}
  By Proposition \ref{prop:ms26}$(\ref{prop:ms26i})$ and
  the definition of $\varepsilon_F^\ell(L;x)$, we
  know there exist $m,e \ge 1$ such that the inequality \eqref{eq:alphaineq} is
  satisfied and $L^m$ separates $p^e$-Frobenius $\ell$-jets at $x$. Moreover, by
  applying Lemma \ref{lem:tensorpower}, we may make the
  replacements
  \[
    e \longmapsto re \qquad \text{and} \qquad
    m \longmapsto \frac{m(p^{re}-1)}{p^e-1}
  \]
  and not change the inequality \eqref{eq:alphaineq} or the condition on
  separation of jets. Thus, by applying these replacements for
  integers $r \ge 1$, the quantity in \eqref{eq:alphaarblarge} satisfies
  \[
    p^{re} - 1 - \frac{\alpha}{\ell+1} \cdot \frac{m(p^{re}-1)}{p^e-1}
    = (p^{re} - 1)\left( 1 - \frac{\alpha}{\ell+1} \cdot \frac{m}{p^e-1} \right)
    \longrightarrow \infty
  \]
  as $r \to \infty$ by the inequality \eqref{eq:alphaineq}. We can therefore 
  assume that the quantity \eqref{eq:alphaarblarge} is arbitrarily large.
\end{proof}
\par As in the proof of \cite[Thm.\ 3.1]{MS14}, the other main ingredient
in the proof of Theorem \ref{thm:sepljets} is the \textsl{Cartier operator}
or the \textsl{trace map}
\[
  T\colon F_*(\omega_X) \longrightarrow \omega_X,
\]
which is a morphism of $\cO_X$-modules. Here, $F\colon X \to X$ denotes the
(absolute) Frobenius morphism. See
\cite[\S1.3]{BK05} for the definition and basic properties of the map $T$.
Briefly, it can be defined as the trace map for relative
duality for the finite flat morphism $F$ as in \cite[Ch.\ III, \S6]{Har66}.
We note that $F$ is finite since $k$ is perfect, and $F$ is flat by Kunz's
theorem \cite[Lem.\ 1.1.1]{BK05} since $X$ is smooth. The trace map satisfies
the following key properties needed for our proof:
\begin{enumerate}[label=$(\alph*)$,ref=\ensuremath{\alph*}]
  \item\label{list:trace1}
    The trace map $T$ and its iterates $T^e\colon F_*^e(\omega_X)
    \to \omega_X$ are surjective \cite[Thm.\ 1.3.4]{BK05};
  \item If $\fa \subseteq \cO_X$ is a coherent ideal sheaf, then
    $T^e$ satisfies the equality
    \begin{equation}\label{eq:tracemapideals}
      T^e\bigl(F_*^e(\fa^{[p^e]}\cdot \omega_X)\bigr) = \fa
      \cdot T^e\bigl(F^e_*(\omega_X)\bigr) = \fa \cdot \omega_X.
    \end{equation}
    This follows from $(\ref{list:trace1})$ by considering the $\cO_X$-module
    structure on $F_*^e(\omega_X)$.
\end{enumerate}
\begin{remark}\label{rem:singvar}
  The surjectivity of the trace map in $(\ref{list:trace1})$ is part of the
  definition for what are called \textsl{$F$-injective} varieties \cite[Def.\
  $2.10(iv)$]{Sch14}, as long as we interpret $\omega_X$ as the cohomology sheaf
  $\mathbf{h}^{-\dim X}\omega_X^\bullet$ of the dualizing complex
  $\omega_X^\bullet$. $F$-injective varieties are related to varieties with Du
  Bois singularities in characteristic zero \cite{Sch09a}. Since the
  justification for \eqref{eq:tracemapideals} still works in this generality,
  our proof of Theorem \ref{thm:sepljets} still works for $F$-injective
  varieties.
\end{remark}
\par We are now ready to prove Theorem \ref{thm:sepljets}. The proof closely
follows that of \cite[Thm.\ $3.1(i)$]{MS14}. The idea is the following: We can
increase powers on $L$ freely so that $L^m$ separates $p^e$-Frobenius
$\ell$-jets, and then tensor by an appropriate product of the form $\omega_X
\otimes L^{p^e - m}$ that is globally generated. Then, $\omega_X \otimes
L^{p^e}$ separates $p^e$-Frobenius $\ell$-jets. The $e$th iterate $T^e$ of the
trace map $T$ allows us to take out these factors of $p^e$, and thereby
deduce that $\omega_X \otimes L$ separates $\ell$-jets.
\begin{proof}[Proof of Theorem \ref{thm:sepljets}]
  Let $\fm_x$ denote the defining ideal of $x$. By Proposition
  \ref{prop:newelljets}, we can find $m$ and $e$ such that $m < p^e - 1$ and the
  restriction map
  \[
    \rho_{L^m}^{\ell,e}\colon H^0(X,L^m) \longrightarrow H^0\bigl(X,L^m \otimes
    \cO_X/(\fm_x^{\ell+1})^{[p^e]}\bigr)
  \]
  is surjective; moreover, we may assume that $p^e - 1 - m$ is
  arbitrarily large. In particular, we may assume that $p^e - m$ is arbitrarily
  large, so that $\omega_X \otimes L^{p^e - m}$ is globally generated. By Lemma
  \ref{lem:tensorwithgg}, we then have that $\omega_X \otimes L^{p^e}$ separates
  $p^e$-Frobenius $\ell$-jets, i.e., the restriction map
  \begin{equation}\label{eq:phisur}
    \varphi\colon H^0(X,\omega_X \otimes L^{p^e}) \longrightarrow
    H^0\bigl(X,\omega_X \otimes L^{p^e} \otimes
    \cO_X/(\fm_x^{\ell+1})^{[p^{e}]}\bigr)
  \end{equation}
  is surjective.
  \par We now use the surjectivity of the $e$th iterate
  $T^e\colon F^e_*(\omega_X) \to \omega_X$ of the trace map $T$. By
  \eqref{eq:tracemapideals}, the map $T^e$ induces a surjective morphism
  \begin{align}
    F_*^e\bigl((\fm_x^{\ell+1})^{[p^e]}\cdot\omega_X\bigr)
    &\longtwoheadrightarrow \fm_x^{\ell+1} \cdot \omega_X.\nonumber
  \intertext{Tensoring this by $L$ and applying the projection formula yields a
  surjective morphism}
    F_*^e\bigl((\fm_x^{\ell+1})^{[p^e]}\cdot\omega_X \otimes L^{p^e}\bigr)
    &\longtwoheadrightarrow \fm_x^{\ell+1} \cdot \omega_X \otimes L.
    \label{eq:tracemapprojformula}
  \end{align}
  Since the Frobenius morphism $F$ is affine, the pushforward
  functor $F_*^e$ is exact,
  hence we obtain the exactness of the left column in the following commutative
  diagram:
  \[
    \begin{tikzcd}
      0\dar & 0\dar\\
      F_*^e\bigl((\fm_x^{\ell+1})^{[p^e]}\cdot \omega_X \otimes L^{p^e}\bigr)
      \rar\dar & \fm_x^{\ell+1} \cdot \omega_X \otimes L\dar\\
      F_*^e(\omega_X \otimes L^{p^e}) \rar \dar & \omega_X \otimes L\dar\\
      F_*^e\bigl(\omega_X \otimes L^{p^e} \otimes
      \cO_X/(\fm_x^{\ell+1})^{[p^e]}\bigr) \rar \dar
      & \omega_X \otimes L \otimes \cO_X/\fm_x^{\ell+1} \dar\\
      0 & 0
    \end{tikzcd}
  \]
  The top horizontal arrow is the map in \eqref{eq:tracemapprojformula}; the
  middle horizontal arrow is obtained analogously from $T^e$ by tensoring with
  $L$, and is therefore surjective. The surjectivity of the middle horizontal
  arrow also implies the bottom horizontal arrow is surjective.
  Finally, by taking global sections in the bottom square, we obtain the
  following commutative square:
  \[
    \begin{tikzcd}
      H^0(X,\omega_X \otimes L^{p^e}) \dar[twoheadrightarrow,swap]{\varphi}\rar
      & H^0(X,\omega_X \otimes L) \dar{\rho_{\omega_X\otimes L}^{\ell,0}}\\
      H^0\bigl(X,\omega_X \otimes L^{p^e} \otimes
        \cO_X/(\fm_x^{\ell+1})^{[p^{e}]}\bigr)
      \rar[twoheadrightarrow]{\psi}
      & H^0(X,\omega_X \otimes L \otimes \cO_X/\fm_x^{\ell+1})
    \end{tikzcd}
  \]
  Note that $\psi$ is surjective because the kernel of the corresponding
  morphism of sheaves is a skyscraper sheaf supported at $x$.
  We have already shown that the restriction map $\varphi$ is
  surjective in \eqref{eq:phisur}, hence $\rho_{\omega_X\otimes L}^{\ell,0}$ is
  necessarily surjective. This shows $\omega_X \otimes L$ indeed separates
  $\ell$-jets at $x$.
\end{proof}
\begin{remark}
  It is possible to define a multi-point version of $\varepsilon_F^\ell(L;x)$
  following \cite{MS14}, which would capture how $\omega_X
  \otimes L$ simultaneously separates higher-order jets at
  different points. This
  method does not improve the result of \cite[Thm.\ 3.1$(iii)$,$(iv)$]{MS14},
  which says the following:
  \begin{enumerate}[label=$(\alph*)$,ref=\ensuremath{\alph*}]
    \item If $\varepsilon_F^0(L;x) > 2$ at some closed point $x \in X$, then
      $\omega_X \otimes L$ is very big, i.e., the rational map defined by
      $\omega_X \otimes L$ is birational onto its image;
    \item If $\varepsilon_F^0(L;x) > 2$ at all closed points $x \in X$, then
      $\omega_X \otimes L$ is very ample.
  \end{enumerate}
\end{remark}

\section{Characterizations of projective space}\label{s:charpn}
We now give an application of our result on separation of jets. As far as we
know, this is the first application of the methods of \cite{MS14}.
\par Recall that a Fano variety is a projective variety whose
anti-canonical bundle $\omega_X^{-1}$
is ample. Using Seshadri constants, Bauer and Szemberg showed the following
characterization of projective space amongst smooth Fano varieties:
\begin{citedthm}[{\cite[Thm.\ 2]{BS09}}]\label{thm:bs09}
  Let $X$ be a smooth Fano variety of dimension $n$ defined over an
  algebraically closed field of characteristic zero. If there exists a closed
  point $x \in X$ with
  \[
    \varepsilon(\omega_X^{-1};x) \ge n + 1,
  \]
  then $X$ is isomorphic to the $n$-dimensional projective space $\PP^n$.
\end{citedthm}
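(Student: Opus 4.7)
The plan is to combine the Seshadri hypothesis with Mori's theory of rational curves, ultimately invoking the Cho--Miyaoka--Shepherd-Barron characterization of projective space \cite[Cor.~0.4]{CMSB02}.

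First, I would reinterpret the hypothesis geometrically using Lazarsfeld's infimum characterization of the Seshadri constant \cite[Prop.~5.1.5]{Laz04}: the inequality $\varepsilon(\omega_X^{-1};x)\ge n+1$ is equivalent to the numerical bound
\[
-K_X\cdot C \;\ge\; (n+1)\cdot\mult_x C
\]
for every reduced irreducible curve $C\subset X$ passing through $x$. In particular, every rational curve through $x$ has anticanonical degree at least $n+1$, with equality forcing $C$ to be smooth at $x$.

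Second, I would upgrade this local bound to the global statement that every rational curve in $X$ has anticanonical degree at least $n+1$. Given an arbitrary rational curve $C\subset X$, pick a minimum-degree representative of its numerical class; by Mori's bend-and-break this may be taken to be a free rational curve $f\colon \PP^1\to X$, so the scheme $\mathrm{Hom}(\PP^1,X)$ has dimension at least $-K_X\cdot C + n$ at $[f]$. The evaluation map to $X$ is then dominant, so some deformation of $f$ has image passing through $x$. This produces a rational curve through $x$ of anticanonical degree $-K_X\cdot C$, which by the previous step must be at least $n+1$.

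Third, the global bound from the second step is precisely the hypothesis of the Mori--Mukai conjecture, which in characteristic zero is the theorem of Cho--Miyaoka--Shepherd-Barron \cite[Cor.~0.4]{CMSB02}. Applying it directly yields $X\cong \PP^n$. I expect the main obstacle to lie in the second step: it depends essentially on characteristic-zero tools for rational curves on Fano manifolds, chiefly the existence of a free rational curve of minimum degree in each class, which lets one deform to pass through $x$ without losing degree. This characteristic-zero dependence is consistent with the author's remark that the positive-characteristic analogue (Theorem~\ref{thm:charpn}) must instead route through Frobenius--Seshadri constants together with a weaker characterization due to Kachi--Koll\'ar that requires the Seshadri bound to hold at every closed point of $X$.
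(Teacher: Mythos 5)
Your overall strategy---reinterpret the Seshadri bound as a numerical bound on curves and feed it into Cho--Miyaoka--Shepherd-Barron \cite[Cor.\ 0.4]{CMSB02}---matches the route the paper takes in its alternative proof, Proposition \ref{prop:charpnallpts}$(\ref{prop:charpnallptschar0})$. But your second step, propagating the local bound at $x$ to a global bound on \emph{all} rational curves, has a genuine gap. First, ``minimum-degree representative of its numerical class'' is not meaningful, since the anticanonical degree is determined by the class; you presumably mean a deformation in an irreducible component of the Hom scheme, but then nothing guarantees that component contains a free curve---rigid rational curves exist on Fano manifolds (e.g.\ lines in the exceptional divisor of a point blowup). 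Second, even for a genuinely free $f$, dominance of the evaluation map $\mathrm{Hom}(\PP^1,X)\times\PP^1 \to X$ only produces deformations through a \emph{general} point; to reach the specific point $x$ one must pass to a compactification, and the limiting $1$-cycle through $x$ may be reducible, with components of anticanonical degree strictly smaller than $-K_X\cdot C$. So the step ``some deformation of $f$ has image passing through $x$'' as an irreducible rational curve of the same degree is not justified.

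The paper avoids this difficulty entirely. Note that CMSB02 only requires $(-K_X\cdot C)\ge n+1$ for rational curves through a \emph{general} point $x_0$, not for all rational curves. From $\varepsilon(\omega_X^{-1};x)\ge n+1 > n$, the openness of the locus $\{x \in X \mid \varepsilon(\omega_X^{-1};x) > n\}$ \cite[Rem.\ 2.15]{MS14} gives $\varepsilon(\omega_X^{-1};x_0)>n$ at a general point $x_0$. Then by \eqref{eq:seshinfcurves} any rational curve $C$ through $x_0$ satisfies $(-K_X\cdot C) > n\cdot \mult_{x_0}C \ge n$, and integrality gives $(-K_X\cdot C)\ge n+1$. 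Combined with uniruledness of Fano varieties, CMSB02 applies directly. This openness-plus-integrality argument is what you should use in place of the deformation step; it is both shorter and correct.
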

\par Our goal in this section is to prove the following positive characteristic
version of this result.
\begin{customthm}{B}\label{thm:charpn}
  Let $X$ be a smooth Fano variety of dimension $n$ defined over an
  algebraically closed field of positive characteristic. If there exists a
  closed point $x \in X$ with
  \[
    \varepsilon(\omega_X^{-1};x) \ge n+1,
  \]
  then $X$ is isomorphic to the $n$-dimensional projective space $\PP^n$.
\end{customthm}
\subsection{Comparison with other results and a weaker
statement}\label{s:charpncomparison}
Before moving on to the proof of Theorem \ref{thm:charpn}, we compare our
result to other characterizations of projective space. As a consequence of
these other characterizations, we also prove a weaker
statement (Proposition \ref{prop:charpnallpts}) to
illustrate why we might expect lower bounds on Seshadri constants to
give characterizations of projective space.
\par Let $K_X$ denote the canonical divisor on $X$.
Theorem \ref{thm:charpn} can be restated as follows:
\begin{theorem}\label{thm:altcharpn}
  Let $X$ be a smooth Fano variety of dimension $n$ defined over an
  algebraically closed field of positive characteristic. If there exists a
  closed point $x \in X$ with
  \[
    (-K_X \cdot C) \ge (\mult_x C) \cdot (n+1)
  \]
  for all reduced and irreducible curves $C \subseteq X$ passing through $x$,
  then $X$ is isomorphic to the $n$-dimensional projective space $\PP^n$.
\end{theorem}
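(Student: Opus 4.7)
The plan is to first observe that Theorem \ref{thm:altcharpn} is merely a reformulation of Theorem \ref{thm:charpn}: by \cite[Prop.~5.1.5]{Laz04}, the hypothesis $(-K_X\cdot C) \ge (\mult_x C)(n+1)$ for all reduced and irreducible curves $C$ through $x$ is equivalent to $\varepsilon(\omega_X^{-1};x)\ge n+1$. So it suffices to prove Theorem \ref{thm:charpn}, which I plan to do by adapting the argument of Bauer--Szemberg \cite{BS09} to positive characteristic, using our Theorem \ref{thm:sepljetsdem} in place of Demailly's characteristic-zero separation-of-jets result.

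The first step is to convert the Seshadri hypothesis into strong jet separation of anticanonical bundles. Applying Theorem \ref{thm:sepljetsdem} with $L=\omega_X^{-k}$ for $k\ge 2$, the bound $\varepsilon(\omega_X^{-k};x)\ge k(n+1)$ implies that $\omega_X\otimes\omega_X^{-k}=\omega_X^{-(k-1)}$ separates $\ell$-jets at $x$ whenever $n+\ell<k(n+1)$. In particular, for every $m\ge 1$ the bundle $\omega_X^{-m}$ separates $m(n+1)$-jets at $x$, giving a large supply of sections of anticanonical powers with prescribed Taylor expansions at $x$ to arbitrarily high order.

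The second step is to combine this with Mori's theorem to produce ``lines'' through $x$: since $X$ is Fano, bend-and-break yields a rational curve $C_0$ through $x$ with $0<(-K_X\cdot C_0)\le n+1$, and the Seshadri hypothesis forces $(-K_X\cdot C_0)=n+1$ and $\mult_x C_0=1$. The jet separation established above then allows one to deform $C_0$ and show that the family of such minimal rational curves through $x$ dominates $X$. From here, following Bauer--Szemberg, one identifies the parameter space of lines through $x$ with $\PP^{n-1}$ (of the expected dimension) and realizes $X$ as the closure of the union of these lines, which in turn is identified with $\PP^n$.

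The main obstacle I expect is the final identification $X\cong\PP^n$: in characteristic zero Bauer--Szemberg invoke Cho--Miyaoka--Shepherd-Barron \cite{CMSB02}, which is not available in positive characteristic, while Kachi--Koll\'ar \cite{KK00} require the Seshadri-type bound at \emph{every} point and so cannot be applied with the hypothesis at a single $x$. My plan is to exploit the arbitrarily high order jet separation from the first step to rigidify the family of lines through $x$: the sections of $\omega_X^{-m}$ with controlled vanishing at $x$ cut out the lines through $x$ scheme-theoretically and force the Hilbert scheme of such lines to be smooth of dimension $n-1$ at $x$, bypassing the wild deformation-theoretic pathologies of characteristic $p$. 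This structural input, together with the equality $(-K_X\cdot C)=n+1$ on every line, should suffice to identify $X$ with $\PP^n$ directly, without appealing to CMSB or Kachi--Koll\'ar.
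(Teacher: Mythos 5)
Your reduction of the statement to Theorem \ref{thm:charpn} via \cite[Prop.~5.1.5]{Laz04} is exactly what the paper does, and your first step---using Theorem \ref{thm:sepljetsdem} to deduce that $\omega_X^{-m}$ separates $m(n+1)$-jets at $x$ for all $m$---is also sound. From there, however, your plan diverges sharply from the paper's, and the divergence is where the gaps lie.

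Your strategy is to produce minimal rational curves through $x$ by bend-and-break, show the family of such curves dominates $X$, identify its parameter space with $\PP^{n-1}$, and then reconstruct $X$ as $\PP^n$ directly. But the steps ``identify the parameter space of lines through $x$ with $\PP^{n-1}$'' and ``realize $X$ as the closure of the union of these lines, identified with $\PP^n$'' are not incidental details: they are the entire content of Cho--Miyaoka--Shepherd-Barron, which you correctly observe is unavailable in characteristic $p$. Your proposed workaround---that sections of $\omega_X^{-m}$ with controlled vanishing at $x$ ``cut out the lines through $x$ scheme-theoretically and force the Hilbert scheme of such lines to be smooth of dimension $n-1$''---is not established and is not a consequence of jet separation. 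Jet separation at $x$ controls the restriction map $H^0(X,\omega_X^{-m}) \to H^0(X,\omega_X^{-m}\otimes\cO_X/\fm_x^{m(n+1)+1})$, which is information about global sections of a line bundle; it says nothing a priori about $H^1$ of normal bundles of rational curves, obstruction spaces, or the scheme structure of the space of morphisms $\PP^1 \to X$. In positive characteristic these deformation spaces can be non-reduced and have excess dimension, and nothing in your argument addresses this. You would essentially need to reprove a characteristic-$p$ version of Kebekus' results on families of singular rational curves, which is an open problem.

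The paper sidesteps this entirely and uses a genuinely different mechanism. From $s(\omega_X^{-1};x) = n+1$ (which is squeezed out of Lemma \ref{lem:seshineq} together with the hypothesis), one deduces that the bundle of principal parts $\sP^{n+1}(\omega_X^{-1})$ is globally generated at $x$. Separately, Lemma \ref{lem:detofpp} gives $\det\bigl(\sP^{n+1}(\omega_X^{-1})\bigr) \cong \cO_X$, and a bundle with trivial determinant that is globally generated at one point is globally trivial. The dual of the short exact sequence \eqref{eq:sesbundlepp} then exhibits $(\Sym^{n+1}\Omega_X)^\vee \otimes \omega_X$ as a quotient of the trivial bundle $\sP^{n+1}(\omega_X^{-1})^\vee$, and pulling back along any nonconstant $f\colon\PP^1\to X$ and expanding the symmetric power forces $(n+1)a_i - b \ge 0$ for every summand $\cO(a_i)$ of $f^*T_X$, hence $a_i > 0$. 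One then concludes by Mori's characterization of $\PP^n$ \cite[Thm.~V.3.2]{Kol96}, which is valid in all characteristics. In short: the paper converts jet separation into a statement about a vector bundle ($\sP^{n+1}(\omega_X^{-1})$ is trivial) and pushes that through Mori's numerical criterion, rather than trying to parametrize rational curves. You should look for the bundle-theoretic route before attempting to control Hilbert schemes of curves in characteristic $p$.
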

\begin{proof}
  This follows from Theorem \ref{thm:charpn} by using \cite[Props.\ 5.1.5,
  5.1.17]{Laz04}, which say
  \begin{equation}\label{eq:seshinfcurves}
    \varepsilon(\omega_X^{-1};x) = \inf_{x \in C \subseteq X} \biggl\{
    \frac{(-K_X \cdot C)}{\mult_{x} C}\biggr\},
  \end{equation}
  where the infimum is taken over all reduced and irreducible curves $C
  \subseteq X$ passing through $x$.
\end{proof}
This formulation is reminiscent of the following conjecture due to Mori and
Mukai, which we mentioned in \S\ref{s:intro}:
\begin{citedconj}[{\cite[Conj.\ V.1.7]{Kol96}}]\label{conj:morimukai}
  Let $X$ be a smooth Fano variety of dimension $n$ defined over an
  algebraically closed field. If the inequality
  \[
    (-K_X \cdot C) \ge n+1
  \]
  holds for every rational curve $C \subseteq X$, then $X$ is isomorphic to
  the $n$-dimen\-sional projective space $\PP^n$.
\end{citedconj}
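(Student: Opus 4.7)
My plan is not to resolve the Mori--Mukai conjecture outright---it is a celebrated open problem in positive characteristic, settled in characteristic zero by Cho, Miyaoka, and Shepherd-Barron---but to attack it by reducing to Theorem~\ref{thm:charpn}. By \cite[Prop.\ 5.1.5]{Laz04} (equivalently formula~\eqref{eq:seshinfcurves}), the Seshadri bound $\varepsilon(\omega_X^{-1};x) \ge n+1$ at a closed point $x$ is equivalent to the multiplicity-weighted inequality $(-K_X \cdot C) \ge (\mult_x C)(n+1)$ for every reduced and irreducible curve $C$ through $x$. So the plan is in two steps: from the hypothesis that $(-K_X \cdot C) \ge n+1$ on every \emph{rational} curve $C \subseteq X$, produce a single closed point $x$ at which the multiplicity-weighted bound holds on all irreducible curves through $x$; then Theorem~\ref{thm:charpn} yields $X \cong \PP^n$ at once.

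To carry out the reduction step, I would try to imitate the characteristic-zero strategy of Cho--Miyaoka--Shepherd-Barron: use Mori's bend-and-break, which is characteristic-free, to produce an abundant family of rational curves of small anti-canonical degree through a very general point $x$; extract a very free rational curve from this family; and then control irreducible curves of high multiplicity at $x$ by a combinatorial analysis of Mori-style chains of rational curves, together with the convex-geometric structure of the nef cone provided by the Fano hypothesis. For any non-rational irreducible curve $C$ through $x$, one would argue either by deforming $C$ through $x$ or by intersecting with a sufficiently positive effective divisor produced from the family of rational curves above.

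The main obstacle is precisely this reduction step. In positive characteristic, generic smoothness can fail, so the family of rational curves through an arbitrary very general point need not contain very free members, and the deformation-theoretic core of the CMSB argument does not carry over verbatim. As a strictly weaker fallback, if one assumes the Mori--Mukai curve bound at \emph{every} closed point, one can bypass CMSB entirely and conclude through the theorem of Kachi--Koll\'ar cited in Section~\ref{s:intro}; recovering the original Mori--Mukai hypothesis, however, appears to require genuinely new positive-characteristic input, perhaps combining $F$-singularity techniques with Mori's program. A speculative alternative---spreading out and reducing modulo $p$ from a lift to characteristic zero---seems to destroy the strict Mori--Mukai inequality on all rational curves in the fibers, and so is unlikely to close the gap without further ideas.
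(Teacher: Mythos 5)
The statement you were asked to prove is labeled \texttt{citedconj} in the source: it is the Mori--Mukai conjecture, reproduced from Koll\'ar's book, and the paper does not prove it. It remains open in positive characteristic, a fact the paper states explicitly in the introduction and again in \S\ref{s:charpncomparison}. So there is no ``paper's proof'' to compare yours against, and you are right not to manufacture one.

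What you do instead tracks the paper's own discussion almost exactly. The paper observes that $\varepsilon(\omega_X^{-1};x) \ge n+1$ is equivalent, via \eqref{eq:seshinfcurves}, to the multiplicity-weighted curve bound at $x$, poses the Question of whether the Mori--Mukai hypothesis forces this bound at \emph{some} closed point, and notes that an affirmative answer independent of Cho--Miyaoka--Shepherd-Barron, combined with Theorem~\ref{thm:charpn}, would settle the conjecture in positive characteristic. That is precisely your two-step reduction, and you correctly locate the gap in the same place the paper does: producing the single point $x$. You also correctly identify the paper's weaker fallback, namely Proposition~\ref{prop:charpnallpts}$(\ref{prop:charpnallptscharp})$, which imposes the Seshadri bound at \emph{all} closed points and invokes Kachi--Koll\'ar rather than the full CMSB machinery. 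Your diagnosis of why the CMSB argument does not transfer (failure of generic smoothness, so families of rational curves through a very general point need not contain very free members) is the standard and correct obstruction. In short: you have not proved the conjecture, but neither does the paper, and your account of the reduction, the needed intermediate step, the fallback, and the obstruction is faithful to the paper's own analysis. The one thing worth flagging is a matter of framing: a referee reading your text without the surrounding context might mistake it for a claimed proof; you should state up front, as the paper does, that this is an open conjecture and that what follows is a conditional reduction.
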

By using results of Kebekus \cite{Keb02} on families of singular rational
curves, Cho, Miyaoka, and Shepherd-Barron proved this conjecture in
characteristic zero. More precisely, they showed the following stronger
statement:
\begin{citedthm}[{\cite[Cor.\ 0.4]{CMSB02}}]\label{thm:cmsb}
  Let $X$ be a smooth projective variety of dimension $n$ defined over an
  algebraically closed field of characteristic zero. If $X$ is uniruled, and
  the inequality
  \[
    (-K_X \cdot C) \ge n+1
  \]
  holds for every rational curve $C \subseteq X$ passing through a general point
  $x_0$, then $X$ is isomorphic to the $n$-dimensional projective space $\PP^n$.
\end{citedthm}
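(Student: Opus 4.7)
Since $X$ is uniruled and $\operatorname{char} k = 0$, Mori's bend-and-break produces rational curves through the general point $x_0$, and among those of minimal anticanonical degree $d$, fix a dominating family $\mathcal{H}$. Let $\mathcal{H}_{x_0}$ denote the normalized subspace parameterizing pointed morphisms $f\colon(\PP^1,0)\to(X,x_0)$ whose images represent curves in $\mathcal{H}$. Since $x_0$ is general and the family is minimal (curves of strictly smaller degree do not occur), each such $f$ is free, so $f^*T_X$ is globally generated and the standard deformation calculation gives
\[
    \dim_{[f]}\mathcal{H}_{x_0}=h^0\bigl(\PP^1,f^*T_X\bigr)-n-2=(d+n)-n-2=d-2.
\]
The hypothesis $(-K_X\cdot C)\ge n+1$ therefore forces $\dim\mathcal{H}_{x_0}\ge n-1$.

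The key geometric input is Kebekus's theorem \cite{Keb02} on minimal dominating families: in characteristic zero the tangent map
\[
    \tau_{x_0}\colon\mathcal{H}_{x_0}\longrightarrow\PP(T_{x_0}X),\qquad [f]\longmapsto\bigl[df|_0(T_0\PP^1)\bigr]
\]
is finite. Since $\dim\PP(T_{x_0}X)=n-1$, finiteness gives $\dim\mathcal{H}_{x_0}\le n-1$, and combined with the lower bound above we obtain $\dim\mathcal{H}_{x_0}=n-1$ and $d=n+1$ exactly. In particular $\tau_{x_0}$ is surjective, so the variety of minimal rational tangents $\mathcal{C}_{x_0}\coloneqq\tau_{x_0}(\mathcal{H}_{x_0})$ fills all of $\PP(T_{x_0}X)$. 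Moreover, freeness together with the degree equality $\deg f^*T_X=n+1$ forces the splitting $f^*T_X\cong\cO_{\PP^1}(2)\oplus\cO_{\PP^1}(1)^{\oplus(n-1)}$, precisely the splitting type of the family of lines in $\PP^n$ passing through a point.

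To finish, I would invoke a structural characterization of $\PP^n$ via its family of minimal rational curves: once we know that through the general point $x_0$ there passes an $(n-1)$-dimensional family of free rational curves of ``line type'' sweeping out every tangent direction, the universal curve $\mathcal{H}_{x_0}\times\PP^1\to X$ becomes (after Stein factorization and normalization) a $\PP^1$-bundle over $\mathcal{H}_{x_0}$, and a Cartan--Fubini / Hwang--Mok rigidity argument combined with the ampleness of $-K_X$ upgrades the resulting birational morphism to an isomorphism $X\simeq\PP^n$. This last step is the main obstacle: Kebekus's finiteness and the dimension count provide a complete infinitesimal picture at $x_0$, but extracting the global isomorphism $X\simeq\PP^n$ is the substantive content of \cite{CMSB02}, relying essentially on Mori's characterization of $\PP^n$ (by the ampleness of its tangent bundle, equivalently by its family of lines) together with deformation-theoretic inputs available only in characteristic zero.
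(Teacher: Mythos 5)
This statement is not proved in the paper at all: it is quoted as \cite[Cor.\ 0.4]{CMSB02} and used as a black box (in Proposition \ref{prop:charpnallpts}), so the only question is whether your sketch is itself a proof, and it is not --- as you in fact concede. The opening moves are the standard ones and are essentially correct: for a minimal dominating family the general member through the general point $x_0$ is free, the deformation count gives $\dim\mathcal{H}_{x_0}=d-2\ge n-1$, and Kebekus's finiteness of the tangent map $\tau_{x_0}$ caps this dimension at $n-1$, forcing $d=n+1$ and surjectivity of $\tau_{x_0}$. One local inaccuracy: freeness plus $\deg f^*T_X=n+1$ alone do not force the splitting $\cO_{\PP^1}(2)\oplus\cO_{\PP^1}(1)^{\oplus(n-1)}$ (for $n\ge 3$ the free bundle $\cO_{\PP^1}(2)^{\oplus 2}\oplus\cO_{\PP^1}(1)^{\oplus(n-3)}\oplus\cO_{\PP^1}$ has the same degree); you need minimality of the family, via bend-and-break, to exclude a second summand of degree $\ge 2$.

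The genuine gap is the final step. Knowing that $\tau_{x_0}$ is finite and surjective onto $\PP(T_{x_0}X)$ and that minimal curves have the splitting type of lines does not yet yield $X\simeq\PP^n$; extracting the global isomorphism is precisely the substantive content of \cite{CMSB02} (and of Kebekus's later simplification), and the appeal to ``Cartan--Fubini / Hwang--Mok rigidity'' does not fill it: the Cartan--Fubini extension theorem compares two varieties with isomorphic varieties of minimal rational tangents and requires nondegeneracy of the VMRT (e.g.\ a generically finite Gauss map), which fails exactly in the linear, extremal case $\mathcal{C}_{x_0}=\PP(T_{x_0}X)$ at issue here --- that case \emph{is} the theorem being proven. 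Likewise, the claim that the universal curve over $\mathcal{H}_{x_0}$ becomes a $\PP^1$-bundle mapping onto $X$ requires the control of curves singular at $x_0$ and of tangencies that \cite{Keb02} supplies, and even granting it, upgrading the resulting generically finite picture to $X\simeq\PP^n$ (say by proving $\tau_{x_0}$ is birational, or by verifying the hypotheses of Mori's criterion \cite[Thm.\ V.3.2]{Kol96}) is the heart of the argument and is nowhere carried out. As written, your proposal is an accurate outline of the known strategy whose decisive step is cited rather than proven --- which is fine if, like the paper, you intend to quote \cite{CMSB02}, but it is not an independent proof.
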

In arbitrary characteristic, as far as we know the only result in this direction
is the following:
\begin{citedthm}[{\cite[Cor.\ 3]{KK00}}]\label{thm:kkcharpn}
  Let $X$ be a smooth projective variety of dimension $n$ defined over an
  algebraically closed field of arbitrary characteristic. Suppose $K_X$ is not
  nef. If
  \begin{enumerate}[label=$(\alph*)$,ref=\ensuremath{\alph*}]
    \item\label{list:kkcond1} $(-K_X \cdot C) \ge n + 1$ for every
      rational curve $C \subseteq X$; and
    \item\label{list:kkcond2} $(-K_X)^n \ge (n+1)^n$,
  \end{enumerate}
  then $X$ is isomorphic to the $n$-dimensional projective space $\PP^n$.
\end{citedthm}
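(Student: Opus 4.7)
My plan is to follow Bauer--Szemberg's strategy in \cite{BS09}, with Theorem~\ref{thm:sepljetsdem} replacing Demailly's characteristic-zero separation-of-jets criterion as the key technical input.

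The first step is to apply Theorem~\ref{thm:sepljetsdem} to the ample line bundle $L = \omega_X^{-2}$. Since Seshadri constants scale linearly with respect to tensor powers,
\[
  \varepsilon(\omega_X^{-2};x) \;=\; 2\,\varepsilon(\omega_X^{-1};x) \;\ge\; 2(n+1) \;>\; n+(n+1),
\]
Theorem~\ref{thm:sepljetsdem} yields that $\omega_X \otimes \omega_X^{-2} = \omega_X^{-1}$ separates $(n+1)$-jets at $x$. In particular the restriction map to $\omega_X^{-1} \otimes \cO_X/\fm_x^{n+2}$ is surjective, so $h^0(X,\omega_X^{-1}) \ge \dim_k \cO_{X,x}/\fm_x^{n+2} = \binom{2n+1}{n+1}$, matching the value of $h^0(\PP^n,\cO(n+1))$.

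Next, I combine this with Mori's bend-and-break theorem, which is valid in positive characteristic, to produce a rational curve $C_0$ through $x$ with $0 < (-K_X \cdot C_0) \le n+1$. The formula \eqref{eq:seshinfcurves} gives the reverse bound $(-K_X \cdot C_0) \ge (n+1)\,\mult_x C_0 \ge n+1$, so equality must hold: $(-K_X \cdot C_0) = n+1$ and $\mult_x C_0 = 1$. Hence $C_0$ is a smooth rational curve through $x$ of minimal anticanonical degree, and a standard deformation count shows that the Hilbert-scheme component $\mathcal{H}$ of such minimal rational curves through $x$ has dimension at least $-K_X\cdot C_0 - 2 = n-1$; its universal family therefore covers $X$. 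We also note $(-K_X)^n \ge \varepsilon(\omega_X^{-1};x)^n \ge (n+1)^n$ from the usual Seshadri volume bound.

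The final step is to identify $X$ with $\PP^n$. Following \cite{BS09}, I would use the abundant sections of $\omega_X^{-1}$ from the first step to set up a rational map $\varphi\colon X \dashrightarrow \PP^n$ that, on a formal neighborhood of $x$, looks like the standard coordinate chart of $\PP^n$ at a point, while the covering family $\mathcal{H}$ plays the role of the family of lines through that point. The main obstacle is precisely this identification: Bauer--Szemberg complete it via the Cho--Miyaoka--Shepherd-Barron characterization \cite{CMSB02}, which is unavailable in positive characteristic. To bypass \cite{CMSB02}, one must argue directly that the tangent map $\mathcal{H} \to \PP(T_xX)$ is an isomorphism and that the curves in $\mathcal{H}$ sweep out a biregular copy of $\PP^n$ -- leveraging the very strong local data provided by separation of $(n+1)$-jets at $x$. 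All other steps reduce quickly to Theorem~\ref{thm:sepljetsdem}, Mori theory in characteristic $p$, and routine Seshadri-constant manipulations.
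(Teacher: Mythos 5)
This is a \emph{cited} theorem: the paper attributes it to Kachi and Koll\'ar \cite[Cor.\ 3]{KK00} and does not reprove it. The paper's only use of it is as a black box in the proof of Proposition~\ref{prop:charpnallpts}$(\ref{prop:charpnallptscharp})$, where the Seshadri-constant hypothesis $\varepsilon(\omega_X^{-1};x)\ge n+1$ at \emph{all} points is shown to \emph{imply} conditions $(\ref{list:kkcond1})$ and $(\ref{list:kkcond2})$, after which Theorem~\ref{thm:kkcharpn} is invoked. So there is no ``paper's proof'' of this statement to compare yours against.

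More importantly, your proposal does not prove the statement as given: it proves something else. You open by asserting $\varepsilon(\omega_X^{-1};x)\ge n+1$, but this is \emph{not} among the hypotheses of Theorem~\ref{thm:kkcharpn}. Hypothesis $(\ref{list:kkcond1})$ bounds $(-K_X\cdot C)$ only for rational curves and says nothing about $\mult_x C$, while $(\ref{list:kkcond2})$ is a degree bound; by \eqref{eq:seshinfcurves}, the Seshadri constant is an infimum of $(-K_X\cdot C)/\mult_x C$ over \emph{all} curves through $x$, so neither hypothesis controls it. (Indeed, whether $(\ref{list:kkcond1})$ forces a point with $\varepsilon(\omega_X^{-1};x)\ge n+1$ is exactly the open Question posed in \S\ref{s:intro} of the paper.) You are in effect trying to prove Theorem~\ref{thm:charpn} while mistaking the Kachi--Koll\'ar hypotheses for a Seshadri bound; your line ``We also note $(-K_X)^n\ge\varepsilon(\omega_X^{-1};x)^n\ge(n+1)^n$'' re-derives hypothesis $(\ref{list:kkcond2})$ from the Seshadri bound, which is precisely what the paper does \emph{en route to invoking} Theorem~\ref{thm:kkcharpn}, not a step in proving it. Finally, even read as an attempt at Theorem~\ref{thm:charpn}, the proposal stops short: you acknowledge the ``main obstacle'' (replacing \cite{CMSB02}) and do not resolve it, whereas the paper's proof of Theorem~\ref{thm:charpn} avoids it entirely by showing that $\sP^{n+1}(\omega_X^{-1})$ is a trivial bundle (using separation of $(n+1)$-jets, Lemma~\ref{lem:detofpp}, and $H^0(X,\cO_X)=k$) and then applying Mori's characterization of $\PP^n$ via the splitting type of $f^*T_X$ for morphisms $f\colon\PP^1\to X$.
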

Given the similarity between the Mori--Mukai conjecture \ref{conj:morimukai} and
Theorem \ref{thm:altcharpn}, we asked the following question in \S\ref{s:intro}:
\begin{question*}
  Let $X$ be a smooth Fano variety of dimension $n$ defined over an
  algebraically closed field of arbitrary characteristic. If the inequality
  \[
    (-K_X \cdot C) \ge n+1
  \]
  holds for every rational curve $C \subseteq X$, then does there exist a closed
  point $x \in X$ with
  \[
    (-K_X \cdot C) \ge (\mult_x C) \cdot (n+1)
  \]
  for all reduced and irreducible curves $C \subseteq X$ passing through $x$?
\end{question*}
As mentioned in \S\ref{s:intro}, the answer to this question is ``yes'' in
characteristic zero by using Theorem \ref{thm:cmsb}, since Theorem\
\ref{thm:cmsb} implies $X \simeq \PP^n$, and therefore
$\varepsilon(\omega_X^{-1};x) \ge n+1$ for all closed points $x \in X$ by
Example \ref{ex:projectivespace}.
If one could answer this question affirmatively independently of Theorem
\ref{thm:cmsb}, then Theorem \ref{thm:bs09} would give an alternative proof of
the Mori--Mukai conjecture \ref{conj:morimukai} in characteristic zero, and
Theorem \ref{thm:altcharpn} would resolve their conjecture in positive
characteristic. 
\par Returning to Seshadri constants, we can show the following statement as a
consequence of the characterizations of projective space given above. The
statement in characteristic zero gives a different proof of \cite[Thm.\ 2]{BS09}.
\begin{proposition}\label{prop:charpnallpts}
  Let $X$ be a smooth Fano variety of dimension $n$ defined over an algebraically
  closed field $k$. Consider the inequality
  \begin{equation}\label{eq:weakervers}
    \varepsilon(\omega_X^{-1};x) \ge n+1
  \end{equation}
  for each closed point $x \in X$. Suppose one of the following is
  satisfied:
  \begin{enumerate}[label=$(\roman*)$,ref=\ensuremath{\roman*}]
    \item\label{prop:charpnallptschar0}
      We have $\Char k = 0$ and the inequality
      \eqref{eq:weakervers} holds for a single closed point $x \in X$;
      or
    \item\label{prop:charpnallptscharp}
      We have $\Char k = p > 0$ and the inequality
      \eqref{eq:weakervers} holds for \emph{all} closed points $x \in X$.
  \end{enumerate}
  Then, $X$ is isomorphic to the $n$-dimensional projective space $\PP^n$ over
  $k$.
\end{proposition}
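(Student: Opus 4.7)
The plan is to reduce both parts to the characterizations of projective space already recalled in this section. The common starting point is the identity~(\ref{eq:seshinfcurves}), which, applied at any point where the Seshadri bound is assumed, immediately translates $\varepsilon(\omega_X^{-1};x) \ge n+1$ into $(-K_X \cdot C) \ge (\mult_x C)(n+1) \ge n+1$ for every reduced irreducible (and in particular every rational) curve $C \subseteq X$ passing through $x$.

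For $(\ref{prop:charpnallptschar0})$ the Seshadri bound is assumed at only one point, whereas Theorem~\ref{thm:cmsb} requires a similar bound for rational curves through a \emph{general} point. I would bridge this gap using the classical fact (see \cite{Laz04}) that $\varepsilon(L;-)$ attains its maximum value on the locus of very general closed points, which gives $\varepsilon(\omega_X^{-1};x_0) \ge \varepsilon(\omega_X^{-1};x) \ge n+1$ for a very general $x_0 \in X$. Now (\ref{eq:seshinfcurves}) applied at $x_0$ supplies the required bound on rational curves through $x_0$, and Mori's theorem guarantees that the smooth Fano variety $X$ is uniruled in characteristic zero; Theorem~\ref{thm:cmsb} then concludes $X \simeq \PP^n$.

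For $(\ref{prop:charpnallptscharp})$ no semicontinuity input is needed, since the Seshadri bound is assumed at every closed point. Given any rational curve $C \subseteq X$, choose any closed $x \in C$; then (\ref{eq:seshinfcurves}) yields $(-K_X \cdot C) \ge n+1$, which is condition~$(\ref{list:kkcond1})$ of Theorem~\ref{thm:kkcharpn}. For condition~$(\ref{list:kkcond2})$ I would invoke the standard upper bound $\varepsilon(L;x) \le \sqrt[n]{L^n}$ from \cite{Laz04}, which forces $(-K_X)^n \ge \varepsilon(\omega_X^{-1};x)^n \ge (n+1)^n$. The remaining hypothesis that $K_X$ is not nef follows from ampleness of $-K_X$, so Theorem~\ref{thm:kkcharpn} applies and gives $X \simeq \PP^n$.

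The whole argument is essentially a bookkeeping combination of (\ref{eq:seshinfcurves}) and the cited theorems, so I anticipate no substantive obstacle; the only slightly delicate step is the maximality of Seshadri constants at very general points used in~(i). The conceptual take-away is that the extra volume hypothesis $(-K_X)^n \ge (n+1)^n$ in Theorem~\ref{thm:kkcharpn}, which has no analogue in Theorem~\ref{thm:cmsb}, is precisely what forces the stronger ``all points'' assumption in~(ii) relative to~(i).
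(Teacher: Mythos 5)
Your proposal is correct and follows essentially the same route as the paper: both parts reduce to Theorems~\ref{thm:cmsb} and~\ref{thm:kkcharpn} respectively, with the inequality~\eqref{eq:seshinfcurves} translating the Seshadri bound into an intersection-number bound on curves, and part~(\ref{prop:charpnallptscharp}) verified exactly as you do. The one place you deviate is in part~(\ref{prop:charpnallptschar0}): the paper passes from the single point $x$ to a general point $x_0$ by observing that the locus $\{\varepsilon(\omega_X^{-1};-) > n\}$ is \emph{open} (citing \cite[Rem.\ 2.15]{MS14}) and then appealing to integrality of $(-K_X\cdot C)$ to upgrade $>n$ to $\ge n+1$, whereas you instead invoke the fact that $\varepsilon(\omega_X^{-1};-)$ attains its supremum on very general points, giving $\varepsilon(\omega_X^{-1};x_0)\ge n+1$ directly with no integrality step. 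Both semicontinuity inputs are standard and both land on a point $x_0$ to which Theorem~\ref{thm:cmsb} applies, so the two variants buy the same thing; the paper's openness statement is slightly stronger (a Zariski-open locus, not merely a very general one), but that extra strength is not needed here.
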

\begin{proof}
  For $(\ref{prop:charpnallptschar0})$, we use Theorem \ref{thm:cmsb}. Since Fano
  varieties are uniruled \cite[Cor.\ IV.1.15]{Kol96}, it suffices to verify the
  condition $(-K_X \cdot C) \ge n+1$.
  First, note that $\varepsilon(\omega_X^{-1};x) > n$ at the given point $x \in
  X$, and since the locus $\bigl\{x \in X \mid
  \varepsilon(\omega_X^{-1};x) > n\bigr\}$
  is open \cite[Rem.\ 2.15]{MS14}, we have $\varepsilon(\omega_X^{-1};x_0) >
  n$ at a general point $x_0 \in X$. By the alternative characterization of
  Seshadri constants in terms of curves in \eqref{eq:seshinfcurves}, we have the
  chain of inequalities
  \[
    n < \varepsilon(\omega_X^{-1};x_0) \le \frac{(-K_X \cdot C)}{\mult_{x_0} C}
    \le (-K_X \cdot C)
  \]
  for any rational curve $C$ containing $x_0$. Since
  $(-K_X \cdot C)$ is an integer, we have $(-K_X \cdot C) \ge n+1$.
  \par For $(\ref{prop:charpnallptscharp})$, we use Theorem \ref{thm:kkcharpn}.
  The verification of condition $(\ref{list:kkcond1})$
  proceeds as in $(\ref{prop:charpnallptschar0})$ by applying
  \eqref{eq:seshinfcurves} to a closed point
  $x \in C$ contained in a given rational curve $C \subseteq X$.
  For condition $(\ref{list:kkcond2})$, we use the inequality
  \[
    \varepsilon(\omega_X^{-1};x) \le \sqrt[n]{(-K_X)^n},
  \]
  which is \cite[eq.\ 5.2]{Laz04}. The inequality $\varepsilon(\omega_X^{-1};x)
  \ge n+1$ then implies condition $(\ref{list:kkcond2})$.
\end{proof}
\subsection{Proof of Theorem \ref{thm:charpn}}
We now turn to the proof of Theorem \ref{thm:charpn}. The main technical
tool is the notion of bundles of principal parts, which are also known as jet
bundles in the literature. See \cite[\S4]{LT95} for a detailed discussion.
\begin{definition}
  Let $X$ be a variety defined over an algebraically closed field $k$ of
  arbitrary characteristic. Denote by $p$ and $q$ the projections
  \[
    \begin{tikzcd}[column sep=0]
      & X \times X\arrow{dl}[swap]{p}\arrow{dr}{q}\\
      X & & X
    \end{tikzcd}
  \]
  Let $\sI \subset \cO_{X \times X}$ be the ideal defining the diagonal, and
  let $L$ be a line bundle on $X$. For each integer $\ell \ge 0$, the
  \textsl{$\ell$th bundle of principal parts} associated to $L$ is the sheaf
  \[
    \sP^\ell(L) \coloneqq p_*(q^*L \otimes \cO_{X \times X}/\sI^{\ell+1}).
  \]
  Note that $\sP^0(L) \simeq L$, since the diagonal in $X \times X$ is isomorphic
  to $X$.
\end{definition}
We will use the following facts about these sheaves from \cite[\S4]{LT95},
assuming $X$ is smooth:
\begin{enumerate}[label=$(\alph*)$,ref=\ensuremath{\alph*}]
  \item There exists a short exact sequence
    \cite[n\textsuperscript{o}\ 4.2]{LT95}
    \begin{equation}\label{eq:sesbundlepp}
      0 \longrightarrow \Sym^\ell(\Omega_X) \otimes L \longrightarrow \sP^\ell(L)
      \longrightarrow \sP^{\ell-1}(L) \longrightarrow 0,
    \end{equation}
    where $\Omega_X$ denotes the cotangent bundle on $X$. By using
    induction and this short exact sequence, it follows that the sheaf
    $\sP^\ell(L)$ is a vector bundle for all integers $\ell \ge 0$.
  \item\label{property:bundlepp2}
    There exists an identification $\sP^\ell(L) \simeq q_*(q^*L \otimes
    \cO_{X\times X}/\sI^{\ell+1})$, and by applying adjunction to the
    map $q^*L \to q^*L \otimes \cO_{X\times X}/\sI^{\ell+1}$, there is a
    morphism
    \[
      d^\ell\colon L \longrightarrow \sP^\ell(L)
    \]
    of sheaves \cite[n\textsuperscript{o}\ 4.1]{LT95}, such that the
    diagram
    \[
      \begin{tikzcd}
         H^0(X,L) \rar{H^0(d^\ell)}\dar[swap]{\rho^{\ell,0}_L} &
         H^0\bigl(X,\sP^\ell(L)\bigr) \dar{\rho^{0,0}_{\sP^\ell(L)}}\\
         H^0(X,L \otimes \cO_X/\fm_x^{\ell+1}) &
         H^0\bigl(X,\sP^\ell(L) \otimes \cO_X/\fm_x\bigr) \lar[swap]{\sim}
      \end{tikzcd}
    \]
    commutes for all closed points $x \in X$ \cite[Lem.\ 4.5(1)]{LT95}.
    Thus, if $L$ separates $\ell$-jets at $x$, then $\sP^\ell(L)$ is globally
    generated at $x$.
\end{enumerate}
\par We will also use the following description of the determinant of the
$\ell$th bundle of principal parts. This
description is stated in \cite[p.\ 1660]{DRS01}.
\begin{lemma}\label{lem:detofpp}
  Let $X$ be a smooth variety of dimension $n$, and let $L$ be a line bundle.
  Then, for each $\ell \ge 0$, we have an isomorphism
  \[
    \det(\sP^{\ell}(L)) \simeq \bigl( \omega_X^\ell \otimes L^{n+1}
    \bigr)^{\frac{1}{n+1}\binom{n+\ell}{n}}.
  \]
\end{lemma}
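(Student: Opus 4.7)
The plan is to proceed by induction on $\ell$, using the short exact sequence \eqref{eq:sesbundlepp} together with the multiplicativity of the determinant in short exact sequences of vector bundles. The base case $\ell = 0$ is immediate from $\sP^0(L) \cong L$: the exponent $\frac{1}{n+1}\binom{n}{n} = \frac{1}{n+1}$ on $L^{n+1}$ returns $L$. Before running the induction, I would rewrite the formula in the integer-exponent form $\det \sP^\ell(L) \cong \omega_X^{\binom{n+\ell}{n+1}} \otimes L^{\binom{n+\ell}{n}}$, using the identity $\frac{\ell}{n+1}\binom{n+\ell}{n} = \binom{n+\ell}{n+1}$; this avoids having to parse a fractional exponent.

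For the inductive step, the sequence \eqref{eq:sesbundlepp} gives
\[
  \det \sP^\ell(L) \cong \det\bigl(\Sym^\ell(\Omega_X) \otimes L\bigr) \otimes \det \sP^{\ell-1}(L).
\]
The left factor I would compute using two standard facts: for a vector bundle $E$ of rank $r$ and a line bundle $L$ one has $\det(E \otimes L) \cong \det(E) \otimes L^r$; and for $E$ of rank $n$ the splitting principle (equivalently, a character computation for the natural $GL_n$-representation) yields $\det(\Sym^\ell E) \cong (\det E)^{\binom{n+\ell-1}{n}}$. Applied to $E = \Omega_X$, which has rank $n$ and determinant $\omega_X$, and using that $\Sym^\ell(\Omega_X)$ has rank $\binom{n+\ell-1}{\ell}$, these combine to give
\[
  \det\bigl(\Sym^\ell(\Omega_X) \otimes L\bigr) \cong \omega_X^{\binom{n+\ell-1}{n}} \otimes L^{\binom{n+\ell-1}{\ell}}.
\]

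Combining this with the inductive hypothesis for $\sP^{\ell-1}(L)$, the inductive step reduces to two applications of Pascal's identity:
\[
  \binom{n+\ell-1}{n} + \binom{n+\ell-1}{n+1} = \binom{n+\ell}{n+1}, \qquad \binom{n+\ell-1}{\ell} + \binom{n+\ell-1}{n} = \binom{n+\ell}{n},
\]
accounting for the $\omega_X$- and $L$-exponents respectively. There is no genuinely hard step: the main thing to be careful about is the initial translation of the fractional exponent into an honest integer one (which uses the divisibility $(n+1)\mid \ell\binom{n+\ell}{n}$), after which the argument is a clean double induction on $\ell$ driven entirely by Pascal's rule.
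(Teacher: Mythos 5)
Your proof is correct and follows the same route as the paper: induction on $\ell$ via the short exact sequence \eqref{eq:sesbundlepp}, multiplicativity of the determinant, the formulas for $\det(\Sym^\ell E)$ and $\det(E\otimes L)$, and the resulting binomial identities. The only cosmetic difference is that you clear the fractional exponent at the outset by rewriting the target as $\omega_X^{\binom{n+\ell}{n+1}} \otimes L^{\binom{n+\ell}{n}}$, whereas the paper carries the fractional form through and checks the identities at the end; both reduce to the same two instances of Pascal's rule. (Minor nit: this is a single induction on $\ell$, not a ``double induction.'')
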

\begin{proof}
  We proceed by induction on $\ell \ge 0$. If $\ell = 0$, then $\sP^0(L) \simeq
  L$, so we are done.
  \par Now suppose $\ell > 0$. Since $X$ is smooth, the cotangent bundle
  $\Omega_X$ has rank $n$, and we have isomorphisms
  \[
    \det\bigl(\Sym^\ell(\Omega_X) \otimes L\bigr)
    \simeq \det\bigl(\Sym^\ell(\Omega_X)\bigr) \otimes
    L^{\binom{n+\ell-1}{n-1}}_{\vphantom{X}}
    \simeq \omega_X^{\binom{n+\ell-1}{n}} \otimes
    L^{\binom{n+\ell-1}{n-1}}_{\vphantom{X}}.
  \]
  By induction and taking top exterior powers in the short exact sequence
  \eqref{eq:sesbundlepp}, we obtain
  \begin{align*}
    \det(\sP^{\ell}(L)) &\simeq \omega_X^{\binom{n+\ell-1}{n}} \otimes
    L^{\binom{n+\ell-1}{n-1}}_{\vphantom{X}} \otimes \det(\sP^{\ell-1}(L))\\
    &\simeq\omega_X^{\binom{n+\ell-1}{n}} \otimes
    L^{\binom{n+\ell-1}{n-1}}_{\vphantom{X}} \otimes \bigl( \omega_X^{\ell-1}
    \otimes L^{n+1} \bigr)^{\frac{1}{n+1}\binom{n+\ell-1}{n}}\\
    &\simeq \bigl( \omega_X^\ell \otimes L^{n+1}
    \bigr)^{\frac{1}{n+1}\binom{n+\ell}{n}}.
  \end{align*}
  Note that the last isomorphism holds because of the identities
  \begin{gather*}
    \binom{n+\ell-1}{n} + \frac{\ell-1}{n+1}\binom{n+\ell-1}{n} =
    \frac{n+\ell}{n+1} \binom{n+\ell-1}{n} =
    \frac{\ell}{n+1}\binom{n+\ell}{n}\\[0.5em]
    \binom{n+\ell-1}{n-1} + \binom{n+\ell-1}{n} = \binom{n+\ell}{n}
  \end{gather*}
  involving binomial coefficients.
\end{proof}
We now return to the setting where our ground field $k$
is an algebraically closed field of characteristic $p > 0$. We begin with
the following key chain of inequalities. Note that our statement is weaker than
\cite[Prop.\ 1.1]{BS09}, but it still suffices for our purposes.
\begin{lemma}\label{lem:seshineq}
  Let $X$ be a smooth Fano variety of dimension $n$, and let $x \in X$ be a
  closed point. Denote $\varepsilon = \varepsilon(\omega_X^{-1};x)$.
  For every integer $m \ge 1$, we have the chain of inequalities
  \begin{equation}\label{eq:numineq}
    (m+1)\varepsilon - (n + 1) \le s(\omega_X^{-m};x) \le m\varepsilon.
  \end{equation}
  In particular, $\varepsilon(\omega_X^{-1};x) \le n+1$.
\end{lemma}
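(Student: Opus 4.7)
The plan is to deduce both inequalities directly, and then to combine them to obtain the ``in particular'' clause. The right inequality $s(\omega_X^{-m};x) \le m\varepsilon$ is immediate from the definition $\varepsilon = \varepsilon(\omega_X^{-1};x) = \sup_{m \ge 1} s(\omega_X^{-m};x)/m$ of the Seshadri constant.

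For the left inequality, I would apply Theorem \ref{thm:sepljetsdem} to the ample line bundle $L \coloneqq \omega_X^{-(m+1)}$, chosen so that the adjoint bundle is $\omega_X \otimes L \cong \omega_X^{-m}$. Since the Seshadri constant is homogeneous with respect to tensor powers (a standard fact that also follows from the curve characterization \eqref{eq:seshinfcurves} combined with superadditivity of $s$), we have $\varepsilon(L;x) = (m+1)\varepsilon$. Thus for every integer $\ell \ge 0$ with $(m+1)\varepsilon > n + \ell$, Theorem \ref{thm:sepljetsdem} applied to $L$ yields that $\omega_X^{-m}$ separates $\ell$-jets at $x$, so $s(\omega_X^{-m};x) \ge \ell$. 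Choosing $\ell$ to be the largest integer strictly less than $(m+1)\varepsilon - n$ and invoking $\lfloor t \rfloor \ge t - 1$, we obtain $s(\omega_X^{-m};x) \ge (m+1)\varepsilon - (n+1)$, as required.

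The ``in particular'' clause is then automatic: combining the two bounds gives $(m+1)\varepsilon - (n+1) \le m\varepsilon$, which rearranges to $\varepsilon \le n+1$. There is essentially no substantive obstacle in this argument; the lemma is a bookkeeping step built on top of Theorem \ref{thm:sepljetsdem} together with the scaling property $\varepsilon(L^r;x) = r\,\varepsilon(L;x)$. The only minor subtleties are the floor-function manipulation that produces $-(n+1)$ rather than $-n$, and the boundary case $(m+1)\varepsilon \le n$ in which no admissible non-negative $\ell$ exists; there the claimed lower bound is automatic provided $s(\omega_X^{-m};x) \ge 0$, which holds whenever $\omega_X^{-m}$ admits a global section not vanishing at $x$.
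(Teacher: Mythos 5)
Your proof is correct and follows essentially the same route as the paper: the right inequality is immediate from the definition of $\varepsilon$, and the left inequality is obtained by applying Theorem~\ref{thm:sepljetsdem} to $L = \omega_X^{-(m+1)}$ together with homogeneity $\varepsilon(L^{m+1};x) = (m+1)\varepsilon(L;x)$; you phrase this as a direct implication while the paper takes the contrapositive (substituting $\ell = s(\omega_X^{-m};x)+1$), but these are the same argument. Your explicit remark on the boundary case where $s(\omega_X^{-m};x)$ could be $-\infty$ is a point the paper silently passes over, and in the lemma's actual use ($m=1$, $\varepsilon = n+1$) the issue never arises.
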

\begin{proof}
  We have the inequality
  \[
    \frac{s(\omega_X^{-m};x)}{m} \le \varepsilon
  \]
  by the definition of the ordinary Seshadri constant in
  \eqref{eq:defordseshadri}. We can then multiply by $m$ throughout to obtain
  the right inequality in \eqref{eq:numineq}.
  \par For the left inequality in \eqref{eq:numineq}, we know that
  if $\omega_X^{-m}$ does not separate $\ell$-jets, then
  \begin{equation}\label{eq:ineqchain}
    \varepsilon(\omega_X^{-(m+1)};x) = (m+1) \cdot \varepsilon(\omega_X^{-1};x)
    \le n + \ell
  \end{equation}
  by the contrapositive of Theorem \ref{thm:sepljetsdem} applied to $L =
  \omega_X^{-(m+1)}$. Note that the equality in \eqref{eq:ineqchain} holds by
  \cite[Ex.\ 5.1.4]{Laz04}. By the definition of $s(\omega_X^{-m};x)$, the
  inequality in \eqref{eq:ineqchain} holds for $\ell =
  s(\omega_X^{-m};x) +1$, hence the left inequality in \eqref{eq:numineq}
  follows. The last assertion follows by rearranging 
  \eqref{eq:numineq} for $m = 1$.
\end{proof}
We now prove Theorem \ref{thm:charpn}. Our proof follows that of
\cite[Thm.\ 1.7]{BS09}, although we must be more careful with tensor operations
in positive characteristic.
\begin{proof}[Proof of Theorem \ref{thm:charpn}]
  We first claim that $\sP^{n+1}(\omega_X^{-1})$ is a trivial bundle.
  By Lemma \ref{lem:seshineq} for $m=1$, we know that at the given point
  $x \in X$, we have the equality $\varepsilon(\omega_X^{-1};x) = n+1$, and
  moreover
  \[
    n+1 = 2 \cdot \varepsilon(\omega_X^{-1};x) - (n+1) \le s(\omega_X^{-1};x)
    \le \varepsilon(\omega_X^{-1};x) = n+1,
  \]
  hence equality holds throughout.
  By property $(\ref{property:bundlepp2})$ of bundles of principal parts, we
  therefore have that $\sP^{n+1}(\omega_X^{-1})$ is globally
  generated at $x$. On the other hand, by Lemma \ref{lem:detofpp} applied to $L =
  \omega_X^{-1}$, we have an isomorphism $\det(\sP^{n+1}(\omega_X^{-1})) \simeq
  \cO_X$. Now to show that $\sP^{n+1}(\omega_X^{-1})$ is a trivial bundle,
  consider the following diagram:
  \[
    \begin{tikzcd}
      \det\bigl(\sP^{n+1}(\omega_X^{-1})\bigr) \rar{\sim}\dar[twoheadrightarrow]
      & \cO_X\dar[twoheadrightarrow]\\
      \det\bigl(\sP^{n+1}(\omega_X^{-1}) \otimes \cO_X/\fm_x\bigr) \rar{\sim}
      & \cO_X/\fm_x
    \end{tikzcd}
  \]
  Suppose the isomorphism in the top row is given by a non-vanishing global
  section
  \[
    s \in H^0\bigl(X,\det\bigl(\sP^{n+1}(\omega_X^{-1})\bigr)\bigr).
  \]
  Let $s_{1,x} \wedge s_{2,x} \wedge \cdots \wedge s_{r,x}$ be the image of $s$
  in $\det\bigl(\sP^{n+1}(\omega_X^{-1}) \otimes \cO_X/\fm_x\bigr)$, which gives
  the isomorphism in the bottom row. Then, since $\sP^{n+1}(\omega_X^{-1})$ is
  globally generated at $x$, each $s_{i,x}$ can be lifted to a global section
  $\widetilde{s}_i \in H^0\bigl(X,\sP^{n+1}(\omega_X^{-1})\bigr)$. Because the
  exterior product $\widetilde{s}_1 \wedge \widetilde{s}_2 \wedge \cdots \wedge
  \widetilde{s}_r$ does not vanish at $x$, this exterior product does not vanish
  anywhere, since $H^0(X,\cO_X) = k$.  Thus, the global sections
  $\widetilde{s}_i$ give a frame for $\sP^{n+1}(\omega_X^{-1})$, and therefore
  $\sP^{n+1}(\omega_X^{-1})$ is a trivial bundle.
  \par To show $X \simeq \PP^n_k$, we use Mori's characterization of projective
  space \cite[Thm.\ V.3.2]{Kol96}. It suffices to show that for
  every nonconstant morphism $f\colon \PP^1_k \to X$,
  the pull back $f^*T_X$ is a sum of line bundles of positive degree. Write
  \[
    f^*(T_X) \simeq \bigoplus_{i=1}^n \cO(a_i) \qquad \text{and} \qquad
    f^*(\omega_{X}^{-1}) \simeq \cO(b),
  \]
  where $b$ is positive since $\omega_X^{-1}$ is ample.
  We want to show that each $a_i$ is positive. Note that
  \[
    f^*(\Omega_X) \simeq f^*(T_X)^\vee \simeq \bigoplus_{i=1}^n \cO(-a_i).
  \]
  Dualizing the short exact sequence \eqref{eq:sesbundlepp}, we have the short
  exact sequence
  \[
    0 \longrightarrow \sP^n(\omega_X^{-1})^\vee \longrightarrow
    \sP^{n+1}(\omega_X^{-1})^\vee \longrightarrow (\Sym^{n+1}\Omega_X)^\vee
    \otimes \omega_X \longrightarrow 0.
  \]
  The quotient on the right is globally generated because it is a quotient of
  the trivial bundle $\sP^{n+1}(\omega_X^{-1})^\vee$. We have
  isomorphisms
  \begin{align*}
    f^*\bigl((\Sym^{n+1}\Omega_X)^\vee \otimes \omega_X\bigr) &\simeq
    \bigl(\Sym^{n+1} f^*(\Omega_X) \bigr)^\vee \otimes f^*(\omega_X)\\
    &\simeq
    \biggl(\Sym^{n+1} \bigoplus_{i=1}^n \cO(-a_i) \biggr)^\vee \otimes \cO(-b),
  \end{align*}
  and this bundle is globally generated since it is the pullback of a globally
  generated bundle. By expanding out the
  symmetric power on the right-hand side, we have a surjection
  \[
    f^*\bigl((\Sym^{n+1}\Omega_X)^\vee \otimes \omega_X\bigr)
    \longtwoheadrightarrow
    \bigoplus_{i=1}^n \cO\bigl((n+1)a_i-b\bigr),
  \]
  hence the direct sum on the right-hand side is also globally generated.
  Finally, this implies
  \[
    (n+1)a_i - b \ge 0,
  \]
  and therefore since $b > 0$, we have that $a_i > 0$ as required.
\end{proof}
\begin{remark}\label{rem:lz16}
  Liu and Zhuang's characteristic zero statement in \cite[Thm.\ 2]{LZ16} is
  stronger than Theorem \ref{thm:charpn}: it only assumes that $X$ is
  $\QQ$-Fano, and in particular that $X$ is not necessarily smooth. While
  Theorem \ref{thm:sepljets} holds for a large class of singular varieties
  (see Remark \ref{rem:singvar}), the rest of our approach does not generalize
  to the non-smooth setting, since
  Mori's characterization of projective space uses bend and
  break techniques. On the other hand, Liu and Zhuang's methods do not seem to
  work in positive characteristic without very strong assumptions on dimension
  and $F$-singularities since, in particular, they use the Kawamata--Shokurov
  basepoint-freeness theorem and the Kawamata--Viehweg vanishing theorem.
\end{remark}

\section*{Acknowledgments}
I am grateful to my advisor Mircea Musta\c{t}\u{a} for his
support and for numerous fruitful conversations, and to Rankeya Datta, Emanuel
Reinecke, and Matthew Stevenson for helpful comments on drafts
of this paper. I would also like to thank J\'anos Koll\'ar and Ziquan Zhuang for
insights on their results on characterizations of projective space. Finally, I
am indebted to the anonymous referee for useful suggestions that improved the
quality of this paper.

\end{document}